\newtheorem{theorem}{Theorem}[section]
\newtheorem{lemma}{Lemma}[section]
\newtheorem{definition}{Definition}[section]
\newtheorem{remark}{Remark}[section]
\newcommand{\sit}{\frac{\partial \hat{\psi}}{\partial \xi^B}(\xi,\theta)\frac{\partial\Phi^B}{\partial F_{i\alpha}}(F)}
\newcommand{\sitb}{\frac{\partial \hat{\psi}}{\partial \xi^B}(\bar{\xi},\bar{\theta})\frac{\partial\Phi^B}{\partial F_{i\alpha}}(\bar{F})}
\newcommand{\ph}{\frac{\partial\Phi^B}{\partial F_{i\alpha}}(F)}
\newcommand{\phv}{\frac{\partial\Phi^B}{\partial F_{i\alpha}}(F)v_i}
\newcommand{\phb}{\frac{\partial\Phi^B}{\partial F_{i\alpha}}(\bar{F})}
\newcommand{\phbv}{\frac{\partial\Phi^B}{\partial F_{i\alpha}}(\bar{F})\bar{v}_i}
\newcommand{\pg}{\frac{\partial \hat{\psi}}{\partial \xi^B}(\xi,\theta)}
\newcommand{\pgb}{\frac{\partial \hat{\psi}}{\partial \xi^B}(\bar{\xi},\bar{\theta})}
\newcommand{\cof}{\hbox{cof}\,}
\newcommand{\dt}{\hbox{det}\,}
\definecolor{listinggray}{gray}{0.9}
\definecolor{lbcolor}{rgb}{0.9,0.9,0.9}
\def\del{\partial}
\begin{document}

\numberwithin{equation}{section}

	\title[Polyconvex thermoviscoelasticity and Applications]{A symmetrizable extension of polyconvex thermoelasticity 
	and applications  to zero-viscosity limits and weak-strong uniqueness}

	\author[C. Christoforou]{Cleopatra Christoforou}
	\address[Cleopatra Christoforou]{Department of Mathematics and Statistics,
 University of Cyprus, Nicosia 1678, Cyprus.}
	\email{christoforou.cleopatra@ucy.ac.cy} 
	\author[M. Galanopoulou]{Myrto Galanopoulou}
	\address[Myrto Galanopoulou]{Computer, Electrical, Mathematical Sciences \& Engineering Division, King Abdullah University of Science and Technology (KAUST), Thuwal, Saudi Arabia.}
	\email{myrtomaria.galanopoulou@kaust.edu.sa}
	\author[A. E. Tzavaras]{Athanasios E. Tzavaras}
	\address[Athanasios E. Tzavaras]{Computer, Electrical, Mathematical Sciences \& Engineering Division, King Abdullah University of Science and Technology (KAUST), Thuwal, Saudi Arabia. }

	\email{athanasios.tzavaras@kaust.edu.sa}
	
	
	\begin{abstract}
	We embed the equations of polyconvex thermoviscoelasticity into an augmented, symmetrizable,  hyperbolic system and derive a relative entropy identity in the extended variables. Following the relative entropy formulation, we prove the convergence from thermoviscoelasticity with  Newtonian viscosity and Fourier heat conduction to 
	smooth solutions of the system of adiabatic thermoelasticity  as both parameters tend to zero. Also, convergence from thermoviscoelasticity to 
	smooth solutions of thermoelasticity in the zero-viscosity limit. Finally, we establish a weak-strong uniqueness result for the equations of adiabatic thermoelasticity 
	in the class of entropy weak solutions.
	\end{abstract}
	
	\maketitle
	
\baselineskip=14pt
	
\section{Introduction}
Systems of conservation laws
\begin{equation}
\label{intro-cl}
\partial_t A(U) +\partial_{\alpha}f_{\alpha}(U)=0
\end{equation}
$U : \mathbb{R}^d \times \mathbb{R}^+ \to \mathbb{R}^n$, are often equipped with an additional conservation law,
\begin{equation}
\label{intro-entropy}
\partial_t H(U) +\partial_{\alpha} q_{\alpha}(U)=0 \, .
\end{equation}
When the entropy $H$ is convex, as a function of the conserved variable $V = A(U)$, the system is called symmetrizable and \eqref{intro-cl} is hyperbolic.
The class of symmetrizable systems encompasses important examples from applications - most notably the equations of gas dynamics -
and were singled out as a class by Lax and Friedrichs \cite{MR0285799}. 
The remarkable stability properties induced by convex entropies are captured by the relative entropy
method of Dafermos \cite{doi:10.1080/01495737908962394,MR546634} and DiPerna \cite{MR523630}, as extended 
for the system \eqref{intro-cl}-\eqref{intro-entropy}
by Christoforou-Tzavaras \cite{christoforou2016relative}.
However, many  thermomechanical systems do not fit under the framework of symmetrizable systems. The reason is
that convexity of the entropy is too restrictive a condition, due to the requirement that thermomechanical systems 
need to comply with the principle of frame-indifference, see \cite{TN1992}. The objective of this article is to discuss 
this issue of stability in situations where a system is generated by a polyconvex free energy.

To put this issue into perspective, consider the system of thermoviscoelasticity,
\begin{align}
\label{sys1}
\partial_t F_{i\alpha}-\partial_{\alpha}v_i&=0 \nonumber\\ 
\partial_t v_i-\partial_{\alpha}\Sigma_{i\alpha}&=\partial_{\alpha}Z_{i\alpha}\\
\partial_t\left(\frac{1}{2}|v|^2+e\right)-\partial_{\alpha}(\Sigma_{i\alpha}v_i)&=\partial_{\alpha}(Z_{i\alpha}v_i)+\partial_{\alpha}Q_{\alpha}+r \, ,  \nonumber
\end{align}
which describes the evolution of a thermomechanical process $\big ( y(x,t) , \theta(x,t) \big) \in \mathbb{R}^3\times\mathbb{R}^+$ with $(x,t)\in\mathbb{R}^3\times\mathbb{R}^+$.
Note that $F \in \mathbb{M}^{3\times3}$ stands for the deformation gradient,  $F = \nabla y$, while $v = \partial_t y$ is the velocity
and $\theta$ is the temperature. It is written in  (\ref{sys1}) as a system of first order equations, with the
first equation in (\ref{sys1})  describing compatibility among the referential velocity gradient
and the time derivative of the deformation gradient. One also  needs to append the constraint 
\begin{equation}
\label{constraint}
\partial_{\alpha}F_{i\beta}=\partial_{\beta}F_{i\alpha}, \qquad i,\alpha,\beta=1,2,3 \, ,
\end{equation}
securing that $F$ is a gradient. The constraint \eqref{constraint} is an involution inherited from the initial data via \eqref{sys1}$_1$.
The remaining variables in (\ref{sys1}) stand for the total stress $\Sigma_{i \alpha } + Z_{i \alpha}$, the internal energy $e$, the
referential heat flux $Q_\alpha$ and the radiative heat supply $r$. The second and third equations in (\ref{sys1})
describe the balance of linear momentum and the balance of energy, respectively. For simplicity we have normalized the reference density $\rho_0 =1$.

The system is closed through constitutive relations. The names of thermomechanical theories reflect the prime variables selected to
describe the respective constitutive theories. For the constitutive theory of thermoviscoelasticity, the prime variables are 
the deformation gradient $F$, velocity gradient $\dot F$,  the temperature $\theta$ and its gradient $G = \nabla \theta$. 
Constitutive theories are required to be consistent with the Clausius-Duhem inequality for smooth thermomechanical processes, what imposes
several restrictions in their format, see \cite{CN1963,TN1992}. In the case of thermoviscoelasticity, the requirement of consistency dictates 
that the elastic stresses $\Sigma$, the entropy $\eta$ and the internal energy $e$  are generated
from the free-energy function $\psi$ via the constitutive theory
\begin{align}
\label{con.rel.aug}
\psi=\psi(F,\theta),\quad
\Sigma=\frac{\partial\psi}{\partial F},\quad
\eta=-\frac{\partial\psi}{\partial\theta},\quad
e=\psi+\theta\eta \, .
\end{align}
Moreover, the total stress is decomposed into an elastic part $\Sigma$ (as above) and a viscoelastic part $Z = Z(F, \dot{F}, \theta, G)$. The viscoelastic
stresses $Z_{i \alpha}$ with $i, \alpha = 1,2,3$, and the referential heat flux  vector $Q = Q(F, \dot{F}, \theta, G)$, with components $Q_\alpha$, $\alpha =1,2,3$, 
must satisfy
\begin{equation}
\label{con.ineq}
\tfrac{1}{\theta^2} Q(F, \dot{F}, \theta, G) \cdot G + \tfrac{1}{\theta}  Z (F, \dot{F}, \theta, G)  : \dot F \ge 0 \qquad \forall  (F, \dot{F}, \theta, G) \, .
\end{equation}
 As a consequence of  thermodynamic consistency with the Clausius-Duhem inequality and its implications \eqref{con.rel.aug}-\eqref{con.ineq},
smooth processes satisfy the entropy production identity
\begin{equation} 
\label{entr.prod}
\begin{aligned}
\partial_t\eta-\partial_{\alpha}\left(\frac{Q_{\alpha}}{\theta}\right) 
=  \nabla\theta\cdot\frac{Q}{\theta^2} +  \nabla v :  \frac{1}{\theta} Z +\frac{r}{\theta}
\ge \frac{r}{\theta}.
 \end{aligned}
\end{equation}
There are two further related thermomechanical theories: the theory of thermoelasticity with prime variables in the constitutive relations $F$, 
$\theta$, $G$ and the theory of adiabatic thermoelasticity with prime variabled $F$, $\theta$ describing a thermoelastic nonconductor of heat.
From a perspective of thermodynamical structure, these emerge from \eqref{con.rel.aug}-\eqref{con.ineq} as the natural special cases, see \cite{CN1963,TN1992}.

A natural question is to consider the convergence from thermoviscoelasticity (\ref{sys1})-(\ref{entr.prod}) 
to either (i) adiabatic thermoelasticity in the limit as thermal conductivity and viscosity tend to zero,  or (ii) thermoelasticity
when only the viscosity tends to zero but thermal conductivity is kept constant.
When the free energy satisfies 
\begin{equation}
\label{cond.gibbs}
\psi_{FF} (F, \theta) > 0 \, , \quad \eta_\theta (F,\theta)  = - \frac{\del \psi}{\del \theta} (F, \theta) > 0 \, ,
\end{equation}
these convergences can be established by means of the relative entropy method, see \cite{MR546634,doi:10.1080/01495737908962394,christoforou2016relative}.

The conditions \eqref{cond.gibbs} coincide with the Gibbs thermodynamic stability conditions and are very natural for gas dynamics.
However, the first condition in \eqref{cond.gibbs} is too restrictive for more general thermoelastic materials. For instance, together with
the principle of frame indifference, it would exclude that free energy becomes infinite as $\det F \to 0$, a requirement necessary
to avoid interpenetration of matter. 
Two approaches are followed in the literature to extend stability properties to a more meaningful class of constitutive laws,
both applicable in an isothermal context.
One approach in Dafermos \cite{Dafermos1986}  exploits the involutive structure of the elasticity system and extends the stability properties by
using ideas from compensated compactness \cite{Tartar1979}. An alternative approach exploits the null-Lagrangian structure for the class 
of polyconvex elasticity  (of Ball \cite{MR1919825})
and embeds the system into an augmented symmetric hyperbolic system \cite{MR1651340,MR1831179,MR3468916,Wagner2009}. This approach has also been applied to
systems from electromagnetism \cite{Serre2004,MR2048567}, to approximations of isothermal polyconvex elasticity by viscosity \cite{LT2006} or relaxation
\cite{MR3232713}, and to systems describing extremal surfaces in Minkwoski space \cite{Duan2017}.
We refer to Dafermos \cite[Ch V]{MR3468916} for an outline of a general framework of such results.

Here, in the spirit of \cite{MR1919825}, we replace convexity of the free energy  by polyconvexity,
\begin{equation}
\label{cond.poly}
\psi (F, \theta) =  \hat{\psi}(\Phi(F),\theta) \, , \quad 
\mbox{where $\Phi(F)=(F,\mathrm{cof} F,\det F)\in\mathbb{M}^{3\times3}\times\mathbb{M}^{3\times3}\times\mathbb{R}$},
\end{equation}
where  $\hat{\psi}=\hat{\psi}(\xi,\theta)$ a function on $\mathbb{R}^{19}\times \mathbb{R}^+$ is assumed to satisfy
\begin{equation}
\label{cond.redgibbs}
\hat\psi_{\xi \xi} (\xi ,  \theta) > 0 \, , \quad \hat\psi_{\theta \theta} (\xi, \theta) < 0 \, .
\end{equation}
and we discuss the stability of systems in thermomechanics when they are generated by polyconvex free energies.
Following \cite{MR1651340,MR1831179}, we extend the system into an augmented symmetrizable system (see Section \ref{sec2}),
and then apply to the resulting system the relative entropy formulation following \cite{christoforou2016relative}.
This allows to carry out the convergence results from thermoviscoelasticity to adiabatic thermoelasticity, or from
thermoviscoelasticity to thermoelasticity under the framework of hypetheses \eqref{cond.poly}-\eqref{cond.redgibbs}. 
The other main result is the weak-strong uniqueness of the system 
of adiabatic thermoelasticity \eqref{sys1adiab} in the class of entropy weak solutions. 


The article is organized as follows: In Section \ref{sec2}, we embed (\ref{sys1})-(\ref{entr.prod}) into an augmented
system which is symmetrizable in the sense of \cite{MR0285799}. To motivate this, we first perform an embedding 
of the system of adiabatic thermoelasticity  (\ref{sys1adiab}) to an augmented symmetrizable, hyperbolic system. 
The extension is based on transport identities of the null-Lagrangians \cite{MR1651340,MR1831179}, 
it preserves the differential constraints \eqref{constraint} and in particular (\ref{sys1})-(\ref{con.rel.aug}) can be regarded 
as a constrained evolution of the augmented system.
In Section \ref{sec3}, we calculate the relative entropy identity (\ref{rel.en.id}) for the augmented system,
which we use in Section \ref{sec4} to prove convergence of solutions of the augmented thermoviscoelasticity 
with Newtonian viscosity and Fourier heat conduction to smooth solutions of adiabatic thermoelasticity in the limit as the parameters
$\mu, k$ tend to zero. In order to do so, we place some appropriate growth conditions on the constitutive functions 
and use the estimates in Appendix \ref{AppA},
that allow us to interpret the relative entropy as a ``metric" measuring the distance between the two solutions. Analogous results can be found
in \cite{MR2909912, MR3381190}  in the context of gas dynamics, and in
\cite{christoforou2016relative}, including bounds for the relative entropy and the relative stresses for general systems of conservation laws in 
the parabolic and hyperbolic case. In Section \ref{sec5}, we treat in a similar manner the problem of convergence from the system of augmented 
thermoviscoelasticity to thermoelasticity in the zero-viscosity limit. Section \ref{sec6} is dedicated to the study of uniqueness of
strong solutions for thermoelasticity within the class of entropy weak solutions. Finally, section \ref{sec7} uses the analysis of section \ref{sec6}
in order to prove convergence from entropy weak solutions of thermoelasticity to strong solutions of the adiabatic thermoelasticity system,
so long as the latter system admits a smooth solution. 
We refer to \cite{Dafermos1986,LT2006} for weak-strong uniqueness for isothermal elasticity, and to \cite{MR2909912} for weak-strong uniqueness
for the full Navier-Stokes-Fourier system. In a class of measure-valued solutions, weak-strong uniqueness results are available for the incompressible Euler equations  \cite{MR2805464},   for  polyconvex elastodynamics \cite{MR1831175}, and for the isothermal gas dynamics system \cite{GSW2015}.


\section{The symmetrizable extension of adiabatic polyconvex thermoelasticity} \label{sec2}

We review the system of adiabatic thermoelasticity describing a thermoelastic nonconductor of heat. The system reads
\begin{align}
\begin{split}
\label{sys1adiab}
\partial_t F_{i\alpha}-\partial_{\alpha}v_i&=0 \\ 
\partial_t v_i-\partial_{\alpha}\Sigma_{i\alpha}&=0\\
\partial_t\left(\frac{1}{2}|v|^2+e\right)-\partial_{\alpha}(\Sigma_{i\alpha}v_i)&=r \end{split}
\end{align}
where $\Sigma$, $e$ are determined by the  constitutive theory (\ref{con.rel.aug}) and $r$ is a given function. Here, $x\in\mathbb{R}^3$ stands for the Lagrangian variable, 
$t\in\mathbb{R}^+$ is time, 
$\partial_t$ is the material derivative, and  the triplet 
$(F,v,\theta)$ is a function of $(x,t)$ taking values in $\mathbb{M}^{3\times3}\times\mathbb{R}^3\times\mathbb{R}^+$. Under the constitutive theory (\ref{con.rel.aug}) 
smooth solutions of~\eqref{sys1adiab} satisfy the entropy production identity
\begin{equation}
\label{S2: entropyadiab}
\partial_t \eta =\frac{r}{\theta} \, ,
\end{equation}
in accordance with the requirement of consistency of the constitutive theory with the Clausius-Duhem inequality (see \cite[Sect. 96]{TN1992}).
It should be noted  that the entropy identity 
\eqref{S2: entropyadiab} is only valid for strong solutions. For weak solutions it is replaced by the entropy inequality
$\del_t \eta \ge \frac{r}{\theta}$  which serves as an admissibility criterion. 
We always consider $F$ to be a deformation gradient, by imposing the constraint
\begin{equation}
\label{def.grad}
\partial_{\alpha}F_{i\beta}=\partial_{\beta}F_{i\alpha}, \quad i,\alpha,\beta=1,2,3 \, ,
\end{equation}
induced as an involution from the initial data.

As the requirement of convex free energy is too restrictive and inconsistent with the principle of material frame indifference, in numerous works, 
convexity has been replaced by weaker assumptions such as polyconvexity, quasiconvexity or rank-1 convexity \cite{MR0475169,MR1919825}. 
Here, we work under the framework  of 
polyconvexity and assume that the free energy $\psi(F,\theta)$ factorizes as a strictly convex function of the minors of $F$ and $\theta$. More precisely,
\begin{equation*}
\psi(F,\theta)=\hat{\psi}(\Phi(F),\theta)\;,
\end{equation*}
where $\hat{\psi}=\hat{\psi}(\xi,\theta)$ is a strictly convex function on $\mathbb{R}^{19}\times \mathbb{R}^+$ and
\begin{equation*}
 \Phi(F)=(F,\mathrm{cof} F,\det F)\in\mathbb{M}^{3\times3}\times\mathbb{M}^{3\times3}\times\mathbb{R}
\end{equation*}
stands for the vector of null-Lagrangians: $F$, the cofactor matrix  $\mathrm{cof} F$,  and the determinant $\det F$
defined respectively for $d=3$ by
\begin{align*}
\begin{split}
(\cof F)_{i\alpha} &=
\frac{1}{2}\epsilon_{ijk}\epsilon_{\alpha\beta\gamma}
F_{j\beta}F_{k\gamma},\\
\dt F &=\frac{1}{6}\epsilon_{ijk}\epsilon_{\alpha\beta\gamma}
F_{i\alpha}F_{j\beta}F_{k\gamma} = \frac{1}{3}(\cof F)_{i \alpha}F_{i \alpha}.
\end{split}
\end{align*}
The components  $\Phi^B(F),$ $B=1,\dots,19$ are null-Lagrangians and satisfy for any motion $y(x,t)$  the Euler-Lagrange equation
\begin{equation}
\label{null-Lan}
\partial_{\alpha}\left( \frac{\del \Phi^B}{\del F_{i\alpha}} (\nabla y ) \right)=0 \, , \quad B=1,\dots,19.
\end{equation}
Using the kinematic compatibility equation (\ref{sys1adiab})$_1$ and (\ref{null-Lan}) we write
\begin{align}
\label{nul-Lang1}
\partial_t \Phi^B(F)
=\partial_{\alpha}\left(\phv\right),
\end{align}
which holds for any deformation gradient $F$ and velocity fields $v.$ This yields the following additional conservation laws
\begin{align}
\begin{split}
\frac{\partial }{\partial t} \dt F 
&= \frac{\del}{\del {x^\alpha}} \bigl((\cof F)_{i\alpha}v_i\bigr)
\\
\frac{\partial }{\partial t} (\cof F)_{k \gamma}
&= \frac{\del}{\del {x^\alpha}}
(\epsilon_{ijk}\epsilon_{\alpha\beta\gamma}F_{j\beta}v_i),
\end{split}                    \label{geomcons}
\end{align}
first observed by T. Qin \cite{MR1651340}.

The stress tensor $\Sigma$ is now expressed
in terms of the null-Langragian vector $\Phi(F)$ in the following way
\begin{align*}
\Sigma_{i\alpha}=\frac{\partial\psi}{\partial F_{i\alpha}}(F,\theta)=\frac{\partial }{\partial F_{i\alpha}}\left(\hat{\psi}(\Phi(F),\theta)\right)=\frac{\partial \hat{\psi}}{\partial\xi^B}(\Phi(F),\theta)\ph.
\end{align*}
Additionally, since the first nine components of $\Phi(F)$ are the components of $F,$ (\ref{con.rel.aug}) implies that 
we can express the entropy $\eta$ and the internal energy $e$ with respect to the null-Lagrangian vector $\Phi(F),$ namely
\begin{align*}
\begin{split}
\eta(F,\theta)&=-\frac{\partial\psi}{\partial\theta}(F,\theta)=-\frac{\partial \hat{\psi}}{\partial\theta}(\Phi(F),\theta) = \hat{\eta}( \Phi(F) , \theta)
,\\
e(F,\theta)&=\psi(F,\theta)-\theta\frac{\partial\psi}{\partial\theta}(F,\theta)=\hat{\psi}(\Phi(F),\theta)-\theta \frac{\partial\hat{\psi}}{\partial\theta}   (\Phi(F),\theta)
= \hat{e}(\Phi(F),\theta) \, ,
\end{split}
\end{align*}
where we have defined 
\begin{equation}
\begin{aligned}
\label{e.tilde}
\hat{\eta}(\xi,\theta) &:= -\frac{\partial \hat{\psi}}{\partial\theta}(\xi,\theta), \quad 
\\
\hat{e}(\xi,\theta) &:= \hat{\psi}(\xi,\theta)-\theta \frac{\partial\hat{\psi}}{\partial\theta}   (\xi,\theta) \, .
\end{aligned}
\end{equation}
In summary, we conclude that solutions of adiabatic thermoelasticity satisfy  the constraint \eqref{def.grad} along with  
\begin{align}
\begin{split}
\label{sys2}
\partial_t \Phi^B(F)-\partial_{\alpha}\left(\phv \right)&=0\\ 
\partial_t v_i-\partial_{\alpha}\left(\frac{\partial \hat{\psi}}{\partial\xi^B}(\Phi(F),\theta)\ph\right)&=0\\
\partial_t \left(\frac{1}{2}|v|^2+\hat{e}(\Phi(F),\theta)\right)-\partial_{\alpha}\left(\frac{\partial \hat{\psi}}{\partial\xi^B}(\Phi(F),\theta)\ph v_i\right)&=r
\end{split}
\end{align}
and the entropy production idenity
\begin{equation}
\label{entropy.prod}
\partial_t \hat{\eta}(\Phi(F),\theta) =\frac{r}{\theta}.
\end{equation}

Next, we embed (\ref{sys2}), \eqref{def.grad} into the augmented system
\begin{align}
\label{aug.sys.2}
\begin{split}
\partial_t \xi^B-\partial_{\alpha}\left(\phv \right)&=0 \\ 
\partial_t v_i-\partial_{\alpha}\left(\frac{\partial \hat{\psi}}{\partial\xi^B}(\xi,\theta)\ph\right)&=0\\
\partial_t \left(\frac{1}{2}|v|^2+\hat{e}(\xi,\theta)\right)-\partial_{\alpha}\left(\frac{\partial \hat{\psi}}{\partial\xi^B}(\xi,\theta)\ph v_i\right)&=r \\
\partial_{\alpha}F_{i\beta} - \partial_{\beta}F_{i\alpha} &=0
\end{split}
\end{align}
while the entropy production \eqref{entropy.prod} is embedded into
\begin{equation}
\label{aug.sys.2.e}
\partial_t \hat{\eta}(\xi,\theta) =\frac{r}{\theta}\;.
\end{equation}
The system \eqref{aug.sys.2} is formulated via the extended vector $(\xi,v,\theta),$ 
with $\xi:=(F,\zeta,w)\in\mathbb{M}^{3\times3}\times\mathbb{M}^{3\times3}\times\mathbb{R}(\simeq\mathbb{R}^{19})$.
The defining functions $\hat \psi$, $\hat e$ and $\hat \eta$ are connected through \eqref{e.tilde}.

Next, we claim the extension enjoys the following properties: 
\begin{enumerate}
	\item Whenever $F(\cdot,t=0)$ is a deformation gradient \textit{i.e.} satisfies (\ref{def.grad}), then $F(\cdot,t)$ remains gradient for all times.
	\item Smooth solutions of this system preserve the constraints $\xi^B=\Phi^B(F),$ $B=1,\dots,19,$ 
	in the sense that if $F(\cdot,t=0)$ is a gradient and $\xi(\cdot,t=0)=\Phi(F(\cdot,t=0)),$ then $F(\cdot,t)$ remains gradient for all times and 
	$\xi(\cdot,t)=\Phi(F(\cdot,t)),$ for all times; therefore, (\ref{sys1adiab}) can be viewed as a constrained evolution of the augmented thermoelasticity system 
	(\ref{aug.sys.2}).
	\item Smooth solutions of \eqref{aug.sys.2} satisfy the additional conservation law \eqref{aug.sys.2.e}.
	\item Under the Hypothesis \eqref{cond.redgibbs}, 
	the augmented system (\ref{aug.sys.2}) is  symmetrizable in the sense of \cite{MR0285799}.
\end{enumerate}

Properties 1. and 2. are clear. To put Properties 3. and 4. into perspective, note that \eqref{aug.sys.2} is a system of
conservation laws \eqref{intro-cl} monitoring the evolution of $U \in \mathbb{R}^n$, $n=23$, where
$$
U=\left( \begin{array}{c} \xi\\ v\\ \theta  \end{array} \right ),\quad
A(U)=\left( \begin{array}{c} \xi\\ v\\ \frac{1}{2} |v|^2+\hat{e}(\xi, \theta) \end{array} \right ),\quad
f_\alpha (U)= \left( \begin{array}{c} \phv \\   \frac{\partial \hat{\psi}}{\partial\xi^B}(\xi,\theta)\ph  \\ 
           \frac{\partial \hat{\psi}}{\partial\xi^B}(\xi,\theta)\ph v_i  \end{array} \right )\; ,
$$ 
with $A(U), f_\alpha (U) : \mathbb{R}^n \to \mathbb{R}^n$ the vectors of conserved quantities and fluxes.
Next, we multiply \eqref{aug.sys.2} by  the multiplier 
\begin{equation}
\label{multiplier}
G(U)=\frac{1}{\theta} \left( \begin{array}{c} \frac{\partial\hat{\psi}}{\partial \xi} (\xi, \theta) \\ v\\ -1 \end{array} \right )\;.
\end{equation}
and use \eqref{e.tilde} and the null-Lagrangian property \eqref{null-Lan} to deduce
$$
- \del_t  \hat \eta (\xi, \theta) = - \frac{r}{\theta}
$$
which is an additional conservation law of the form \eqref{intro-entropy} with (mathematical) entropy $H(U) = - \hat \eta (\xi, \theta)$ and 
entropy flux $q_\alpha = 0$. This yields Property 3.

We place now Hypothesis \eqref{cond.redgibbs} which may be equivalently expressed as
\begin{equation}
\label{cond.redgibbs2}
\hat\psi_{\xi \xi} (\xi ,  \theta) > 0 \, , \quad \hat\eta_{\theta} (\xi, \theta) > 0 \, ,
\end{equation}
and recall that, in view of \eqref{e.tilde}, we have $\frac{\del \hat e}{\del \theta} = \theta \frac{\del \hat \eta}{\del \theta} > 0$. Hence,
$A : \mathbb{R}^{n} \to \mathbb{R}^{n}$ is one-to-one and $\nabla A(U)$ is nonsingular. 
It is shown in \cite{christoforou2016relative} that symmetrizability, \textit{i.e.} convexity of the entropy $H$ with respect to the conserved variable
$V=A(U)$, can be equivalently expressed by the positivity of the symmetric matrix
$$
 \nabla^2 H (U) - \sum_{k=1, ..., n} G^k (U) \nabla^2 A^k (U) > 0 \, .
$$
The latter is computed,
$$
\begin{aligned}
\nabla^2 H (U) - \sum_{k=1, ..., n} G^k (U) \nabla^2 A^k (U) 
&= - \nabla^2_{(\xi,v,\theta)} \hat \eta(\xi,\theta)  + \frac{1}{\theta} \nabla^2_{(\xi,v,\theta)} \big ( \tfrac{1}{2} |v|^2 + \hat e (\xi,\theta) \big )
\\
& \stackrel{\eqref{e.tilde}}{=} 
 \begin{pmatrix}
 \tfrac{1}{\theta} \hat \psi_{\xi \xi} & 0 & 0 \\ 0 & \tfrac{1}{\theta} I_{3 \times 3} & 0  \\ 
0 & 0 & \tfrac{1}{\theta} \hat \eta_{\theta}  
\end{pmatrix}
\; > 0 \, ,
\end{aligned}
$$
by \eqref{cond.redgibbs}. Hence, \eqref{aug.sys.2} is symmetrizable and thus hyperbolic.

The same extension applies to the system of thermoviscoelasticity (\ref{sys1}) - (\ref{entr.prod}), and the associated 
augmented system reads
\begin{align}
\begin{split}
\label{aug.sys}
\partial_t \xi^B-\partial_{\alpha}\left(\phv \right)&=0 \\ 
\partial_t v_i-\partial_{\alpha}\left(\frac{\partial \hat{\psi}}{\partial\xi^B}(\xi,\theta)\ph\right)&=\partial_{\alpha}Z_{i\alpha} \\
\partial_t \left(\frac{1}{2}|v|^2+\hat{e}\right)-\partial_{\alpha}\left(\frac{\partial \hat{\psi}}{\partial\xi^B}(\xi,\theta)\ph v_i\right)&=\partial_{\alpha}(Z_{i\alpha}v_i)+\partial_{\alpha}Q_{\alpha}+r \\
\partial_{\alpha}F_{i\beta} - \partial_{\beta}F_{i\alpha} &=0
\end{split}
\end{align}
while the entropy identity takes the form:
\begin{equation}\label{aug.sys.e}
\partial_t\hat{\eta}-\partial_{\alpha}\left(\frac{Q_{\alpha}}{\theta}\right)=\nabla\theta\cdot\frac{Q}{\theta^2}+\frac{1}{\theta} (\partial_{\alpha}v_i)
Z_{i\alpha}+\frac{r}{\theta}\;.
\end{equation}
The extension bears the same properties as listed in the case of adiabatic thermoelasticity.

In Sections \ref{sec4} and \ref{sec5}, we consider thermoviscoelasticity 
with Newtonian viscosity and Fourier heat conduction, respectively
\begin{equation}
\label{vcoeff}
\begin{aligned}
Z &= \mu\nabla v \quad \mu=\mu(F,\theta)>0 \, ,
\\
Q &= k\nabla\theta \quad k=k(F,\theta)>0 \, .
\end{aligned}
\end{equation}
We employ (with a slight abuse) the notation 
$$
\mu (\xi, \theta) =  \mu(F,\theta) \, , \quad  k (\xi, \theta) = k (F,\theta)
$$
for the extended viscosity and heat conduction coefficients. 
The augmented systems~\eqref{aug.sys.2} and~(\ref{aug.sys}) belong
to the general class of hyperbolic-parabolic systems of the form 
\begin{equation}
\label{general.hyper-par}
\partial_t A(U)+\partial_{\alpha}f_{\alpha}(U)=\varepsilon\partial_{\alpha}(B_{\alpha\beta}(U)\partial_{\beta}U) \, ,
\end{equation}
where $U=U(x,t)\in\mathbb{R}^n,$ $x\in\mathbb{R}^d,$ $t\in\mathbb{R}^+$ while $A,f_{\alpha}:\mathbb{R}^n\to\mathbb{R}^n$ and 
$B_{\alpha\beta}:\mathbb{R}^n\to\mathbb{R}^{n\times n}$.
A relative entropy identity for this class of systems, assuming that the hyperbolic part is symmetrizable in the sense of Friedrichs and Lax~\cite{MR0285799}, 
has been developed in~\cite{christoforou2016relative} under appropriate hypotheses. 
In short, it is required that $A(U)$ is globally invertible, there exists an entropy-entropy flux pair $(H,q)$, 
\textit{i.e.} there exists a smooth multiplier $G(U):\mathbb{R}^{n}\to\mathbb{R}^{n}$ such that
\begin{align}
\begin{split}
\nabla H&=G\cdot\nabla A\\
\nabla q_\alpha&=G\cdot\nabla f_\alpha,\qquad \alpha=1\dots,d
\end{split}
\end{align}
and additional hypotheses on dissipative structure of the parabolic part. Smooth solutions to systems~\eqref{general.hyper-par} satisfy the additional identity
\begin{align}
\begin{split}
\label{general.e-eflux}
\partial_t H(U) + \partial_{\alpha}q_{\alpha}(U)&=\varepsilon\partial_{\alpha}(G(U)\cdot B_{\alpha\beta}(U)\partial_{\beta}U)-\varepsilon\nabla G(U)\partial_{\alpha}U\cdot  B_{\alpha\beta}(U)\partial_{\beta}U \, .
\end{split}
\end{align}
The augmented system~\eqref{aug.sys}-\eqref{aug.sys.e} 
is of the form \eqref{general.hyper-par}--\eqref{general.e-eflux} with the multiplier $G(U)$ as in \eqref{multiplier}. 

There is available in  \cite[Theorem 2.2]{christoforou2016relative}, a general convergence theory for the zero viscosity limit 
in \eqref{general.hyper-par}: Namely, an entropy dissipation condition from \cite{MR3468916} is used:
\begin{align*}
\textit{there exists $\mu>0$ such that }\;\; \sum_{\alpha,\beta}\nabla G(U)\partial_{\alpha}U\cdot B_{\alpha\beta}(U)\partial_{\beta}U \geq 
\mu\sum_{\alpha}\left|\sum_{\beta}B_{\alpha\beta}(U)\partial_{\beta}U\right|^2  \, , 
\end{align*}
which allows dissipation to control the diffusion even for degenerate viscosity matrices, and is shown 
in  \cite{christoforou2016relative} to imply  convergence in the zero-viscosity limit for a fairly general setting of hyperbolic-parabolic systems.
Unfortunately, the augmented system of thermoviscoelasticity does not satisfy this condition and the convergence as the viscosity 
and heat-conductivity tend to zero will be handled as a special case.

\section{The relative entropy identity} \label{sec3}
Consider two smooth solutions of (\ref{aug.sys}), $U=(\xi,v,\theta)^T$ and $\bar{U}=(\bar{\xi},\bar{v},\bar{\theta})^T$
and set the mathematical entropy $H(U)$ to be the negative of the thermodynamic entropy, $-\hat{\eta}(\xi,\theta).$
We define the relative entropy and the corresponding relative flux
\begin{align}
\label{defrelen}
H(U|\bar{U})&=-\hat{\eta}(U)+\hat{\eta}(\bar{U})-G(\bar{U})\cdot(A(U)-A(\bar{U})) \nonumber\\
&=\frac{1}{\bar{\theta}}\left[\hat{\psi}(\xi,\theta|\bar{\xi},\bar{\theta})+\frac{1}{2}|v-\bar{v}|^2+(\hat{\eta}-\bar{\hat{\eta}})(\theta-\bar{\theta})\right]\\
\label{re}
q_{\alpha}(U|\bar{U})&=q_{\alpha}(U)-q_{\alpha}(\bar{U})-G(\bar{U})\cdot(f_{\alpha}(U)-f_{\alpha}(\bar{U})) \nonumber
\end{align}
where
\begin{equation*} 
\hat{\psi}(\xi,\theta|\bar{\xi},\bar{\theta})=\hat{\psi}-\bar{\hat{\psi}}-\pgb(\xi^B-\bar{\xi}^B)+\bar{\hat{\eta}}(\theta-\bar{\theta}),
\end{equation*}
and using that $\hat{\psi}_\theta=-\hat{\eta}$. Throughout the paper, we use for convenience the abbreviation $\hat{\eta}=\hat{\eta}(U),$ $\bar{\hat{\eta}}=\hat{\eta}(\bar{U}),$ $\hat{\psi}=\hat{\psi}(\xi,\theta),$ $\bar{\hat{\psi}}=\hat{\psi}(\bar{\xi},\bar{\theta})$ and so on.
We write the equations (\ref{aug.sys}) for each solution $U$ and $\bar{U}$ respectively, we subtract
and multiply by $-\bar{\theta}G(\bar{U})=\left(-\pgb,-\bar{v},1\right)^T.$ Rearranging some terms we obtain
\begin{small}
	\begin{align}
	\label{entr.eq.1}
	\begin{split}
	\partial_t&\left[\hat{\psi}(\xi,\theta|\bar{\xi},\bar{\theta})+\frac{1}{2}|v-\bar{v}|^2+(\hat{\eta}-\bar{\hat{\eta}})(\theta-\bar{\theta})\right]
	-\partial_t(-\bar{\theta}(\hat{\eta}-\bar{\hat{\eta}}))\\
	&\!\!\!\!+\partial_{\alpha}\!\left[\pgb\!\left(\phv-\phbv\right)\!-\!\left(\sit\right)\!\!(v_i-\bar{v}_i)\right.\\
	& \qquad\qquad\qquad\qquad\qquad\qquad\qquad\qquad\qquad\qquad\quad\quad -(Z_{i\alpha}-\bar{Z}_{i\alpha})(v_i-\bar{v}_i)-(Q_{\alpha}-\bar{Q}_{\alpha})\bigg]\\
	&=-\partial_t\left(\pgb\right)\!\!(\xi^B-\bar{\xi}^B)+\partial_{\alpha}\left(\pgb\right)\left(\phv-\phbv\right)\\
	&\;\;\;-\partial_{\alpha}\left(\sitb\right)(v_i-\bar{v}_i)+\partial_{\alpha}\bar{v}_i\left(\sit-\sitb\right)\\
	&\;\;\;-\partial_{\alpha}\bar{Z}_{i\alpha}(v_i-\bar{v}_i)+\partial_{\alpha}\bar{v}_i(Z_{i\alpha}-\bar{Z}_{i\alpha})+r-\bar{r}.
	\end{split}
	\end{align}
\end{small}
Next, we write the difference between equations (\ref{aug.sys.e}) for the two solutions and multiply by $-\bar{\theta}:$ 
\begin{align*} 
\partial_t(-\bar{\theta}(\hat{\eta}-\bar{\hat{\eta}}))+\partial_{\alpha}\left(\bar{\theta}\frac{Q_{\alpha}}{\theta}-\bar{\theta}\frac{\bar{Q}_{\alpha}}{\bar{\theta}}\right)
=-\partial_t\bar{\theta}(\hat{\eta}-\bar{\hat{\eta}})+\partial_{\alpha}\bar{\theta}\left(\frac{Q_{\alpha}}{\theta}-\frac{\bar{Q}_{\alpha}}{\bar{\theta}}\right)\\
-\bar{\theta}\left(\frac{1}{\theta}\partial_{\alpha}v_i Z_{i\alpha}-\frac{1}{\bar{\theta}}\partial_{\alpha}\bar{v}_i \bar{Z}_{i\alpha}\right)
-\bar{\theta}\left(\nabla\theta\cdot\frac{Q}{\theta^2}-\nabla\bar{\theta}\cdot\frac{\bar{Q}}{\bar{\theta}^2}+\frac{r}{\theta}-\frac{\bar{r}}{\bar{\theta}}\right).
\end{align*}
Therefore, setting
\begin{align*}
\begin{split}
I_1&=-\bar{\theta}\left(\frac{1}{\theta}\partial_{\alpha}v_i Z_{i\alpha}-\frac{1}{\bar{\theta}}\partial_{\alpha}\bar{v}_i \bar{Z}_{i\alpha}\right),\\
I_2&=\nabla\bar{\theta}\cdot\left(\frac{Q}{\theta}-\frac{\bar{Q}}{\bar{\theta}}\right)
-\bar{\theta}\left(\nabla\theta\cdot\frac{Q}{\theta^2}-\nabla\bar{\theta}\cdot\frac{\bar{Q}}{\bar{\theta}^2}\right),\\
I_3&=r-\bar{r}-\bar{\theta}\left(\frac{r}{\theta}-\frac{\bar{r}}{\bar{\theta}}\right)
\end{split}
\end{align*}
equality (\ref{entr.eq.1}) becomes
\begin{small}
\begin{align}
	\partial_t&\left[\hat{\psi}(\xi,\theta|\bar{\xi},\bar{\theta})+\frac{1}{2}|v-\bar{v}|^2+(\hat{\eta}-\bar{\hat{\eta}})(\theta-\bar{\theta})\right] \nonumber \\
	&\!\!\!\!\!+\partial_{\alpha}\!\left[\pgb\!\left(\phv-\phbv\right)\!-\!        \sit         (v_i-\bar{v}_i)\right. \nonumber  \\
	& \qquad\qquad\qquad\qquad\qquad\qquad\qquad -(Z_{i\alpha}-\bar{Z}_{i\alpha})(v_i-\bar{v}_i)-(Q_{\alpha}-\bar{Q}_{\alpha})
	\left.+\bar{\theta}\!\left(\frac{Q_{\alpha}}{\theta}-\frac{\bar{Q}_{\alpha}}{\bar{\theta}}\right)\right] \nonumber  \\
	&=-\partial_t\bar{\theta}(\hat{\eta}-\bar{\hat{\eta}})
	-\partial_t\left(\pgb\right)\!\!(\xi^B-\bar{\xi}^B)+\partial_{\alpha}\left(\pgb\right)\left(\phv-\phbv\right) \nonumber \\
	&\;\;\;-\partial_{\alpha}\left(\sitb\right)(v_i-\bar{v}_i)+\partial_{\alpha}\bar{v}_i\left(\sit-\sitb\right) \nonumber \\
	&\;\;\;-\partial_{\alpha}\bar{Z}_{i\alpha}(v_i-\bar{v}_i)+\partial_{\alpha}\bar{v}_i(Z_{i\alpha}-\bar{Z}_{i\alpha})+I_1+I_2+I_3\;.
\label{iden.interm}
\end{align}
\end{small}
Using the null-Lagrangian property~\eqref{null-Lan} and system~\eqref{aug.sys}, it yields	
\vfil\eject
\begin{small}
	\begin{align}\label{III}
	\partial_t&\left[\hat{\psi}(\xi,\theta|\bar{\xi},\bar{\theta})+\frac{1}{2}|v-\bar{v}|^2+(\hat{\eta}-\bar{\hat{\eta}})(\theta-\bar{\theta})\right]\nonumber\\
	&\!\!\!\!\!+\partial_{\alpha}\!\left[\pgb\!\left(\phv-\phbv\right)\!-\!        \sit         (v_i-\bar{v}_i)\right. \nonumber\\\
	& \qquad\qquad\qquad\qquad\qquad\qquad\qquad\qquad -(Z_{i\alpha}-\bar{Z}_{i\alpha})(v_i-\bar{v}_i)-(Q_{\alpha}-\bar{Q}_{\alpha})
	\left.+\bar{\theta}\!\left(\frac{Q_{\alpha}}{\theta}-\frac{\bar{Q}_{\alpha}}{\bar{\theta}}\right)\right]\nonumber\\
	&=
	-\partial_t\bar{\theta}\left(\hat{\eta}-\bar{\hat{\eta}}+\frac{\partial^2 \hat{\psi}}{\partial \xi^B\partial\theta}(\bar{\xi},\bar{\theta})(\xi^B-\bar{\xi}^B)
	-\frac{\partial\bar{\hat{\eta}}}{\partial\theta}(\theta-\bar{\theta})\right)
	-\partial_t\bar{\theta}  \frac{\partial\bar{\hat{\eta}}}{\partial\theta}(\theta-\bar{\theta})\nonumber\\
	&\;\;\;+\partial_t\bar{\xi}^B\left(\pg-\pgb-\frac{\partial^2 \hat{\psi}}{\partial\xi^B\partial\xi^A}(\bar{\xi},\bar{\theta})(\xi^A-\bar{\xi}^A)
	-\frac{\partial^2 \hat{\psi}}{\partial\xi^B\partial\theta}(\bar{\xi},\bar{\theta})(\theta-\bar{\theta})\!\right)\nonumber\\
	&\;\;\;-\partial_t\bar{\xi}^B\left(\pg-\pgb +\frac{\partial\bar{\hat{\eta}}}{\partial\theta} (\theta-\bar{\theta})\!\right)\!+\!\partial_{\alpha}\left(\pgb\right)\!\!\left(\phv-\phbv\right) \nonumber\\
	&\;\;\;-\partial_{\alpha}\left(\sitb\right)(v_i-\bar{v}_i)+\partial_{\alpha}\bar{v}_i\left(\sit-\sitb\right)\nonumber\\
	&\;\;\;-\partial_{\alpha}\bar{Z}_{i\alpha}(v_i-\bar{v}_i)+\partial_{\alpha}\bar{v}_i(Z_{i\alpha}-\bar{Z}_{i\alpha})+I_1+I_2+I_3\nonumber\\
	&=-\partial_t\bar{\theta}\hat{\eta}(\xi,\theta|\bar{\xi},\bar{\theta})+\partial_t\bar{\xi}^B\frac{\partial \hat{\psi}}{\partial \xi^B}(\xi,\theta|\bar{\xi},\bar{\theta})
	-\partial_t\bar{\theta}\,\frac{\partial\bar{\hat{\eta}}}{\partial\theta}(\theta-\bar{\theta})+\partial_t\bar{\xi}\,\frac{\partial^2 \hat{\psi}}{\partial\xi^B\partial\theta}(\bar{\xi},\bar{\theta})(\theta-\bar{\theta})\nonumber\\
	&\;\;\;-\phb\partial_{\alpha}\bar{v}_i\left(\pg-\pgb\right)+\partial_{\alpha}\left(\pgb\right)\left(\phv-\phbv\right)\nonumber\\
	&\;\;\;-\phb\partial_{\alpha}\left(\pgb\right)(v_i-\bar{v}_i)+\partial_{\alpha}\bar{v}_i\left(\sit-\sitb\right)\nonumber\\
	&\;\;\;-\partial_{\alpha}\bar{Z}_{i\alpha}(v_i-\bar{v}_i)+\partial_{\alpha}\bar{v}_i(Z_{i\alpha}-\bar{Z}_{i\alpha})+I_1+I_2+I_3,
	\end{align}
\end{small}
where
\begin{small}
	\begin{align}
	\label{partial.eta.rel}\hat{\eta}(\xi,\theta|\bar{\xi},\bar{\theta})&:=\hat{\eta}({\xi},{\theta})-{\hat{\eta}}(\bar{\xi},\bar{\theta})-\frac{\partial{\hat{\eta}}}{\partial \xi^B}(\bar{\xi},\bar{\theta})(\xi^B-\bar{\xi}^B)
	-\frac{\partial{\hat{\eta}}}{\partial\theta}(\bar{\xi},\bar{\theta})(\theta-\bar{\theta})\\
	\label{partial.g.rel}
	\frac{\partial \hat{\psi}}{\partial \xi^B}(\xi,\theta|\bar{\xi},\bar{\theta})&\!:=\! \!\pg-\pgb-\frac{\partial^2 \hat{\psi}}{\partial\xi^B\partial\xi^A}(\bar{\xi},\bar{\theta})(\xi^B-\bar{\xi}^B)
	-\frac{\partial^2 \hat{\psi}}{\partial\xi^B\partial\theta}(\bar{\xi},\bar{\theta})(\theta-\bar{\theta})\,.
	\end{align}
\end{small}
Next we observe that, using the entropy identity (\ref{aug.sys.e}), we have
\begin{small}
	\begin{align*}
	-\partial_t\bar{\theta}\frac{\partial{\hat{\eta}}}{\partial\theta}(\bar\xi,\bar\theta)\,\,(\theta-\bar{\theta})
	&+\partial_t\bar{\xi}\frac{\partial^2 \hat{\psi}}{\partial\xi^B\partial\theta}(\bar{\xi},\bar{\theta})(\theta-\bar{\theta}) =-\partial_t{\hat{\eta}}(\bar\xi,\bar\theta)\,(\theta-\bar{\theta})\\
	&=-\partial_{\alpha}\left(\frac{\bar{Q}_{\alpha}}{\bar{\theta}}(\theta-\bar{\theta})\right)+
	\frac{\bar{Q}_{\alpha}}{\bar{\theta}}\partial_{\alpha}(\theta-\bar{\theta})-
	\left(\nabla\bar{\theta}\cdot\frac{\bar{Q}}{\bar{\theta}^2}+\frac{1}{\bar{\theta}}\partial_{\alpha}\bar{v}_i \bar{Z}_{i\alpha}+\frac{\bar{r}}{\bar{\theta}}\right)(\theta-\bar{\theta}).
	\end{align*}
\end{small}
Using the null-Lagrangian property~\eqref{null-Lan}, we rearrange the remaining terms as follows
\begin{small}
	\begin{align*}
	&-\phb\partial_{\alpha}\bar{v}_i\!\left(\pg-\pgb\!\right)+\partial_{\alpha}\left(\pgb\right)\!\left(\phv-\phbv\right)\\
	&-\phb\partial_{\alpha}\left(\pgb\right)(v_i-\bar{v}_i)\!+\!\partial_{\alpha}\bar{v}_i\!\left(\sit-\sitb\right)\\
	&\quad\quad=\partial_{\alpha}\left(\pgb\right)\!\left(\ph-\phb\right)\!(v_i-\bar{v}_i)\\
	&\quad\quad\;\;+\partial_{\alpha}\bar{v}_i\left(\pg-\pgb\right)\,\left(\ph-\phb\right)\\
	&\quad\quad\;\;+\partial_{\alpha}\left(\pgb\left(\ph-\phb\right)\bar{v}_i\right)\;.
	\end{align*}
\end{small}
Substituting the above relations in (\ref{III}), we give to the relative entropy identity its final form
\begin{footnotesize}
	\begin{align}
	\label{rel.en.id}
	\begin{split}
	&\partial_t\left[\hat{\psi}(\xi,\theta|\bar{\xi},\bar{\theta})+\frac{1}{2}|v-\bar{v}|^2+(\hat{\eta}(\xi,\theta)-{\hat{\eta}}(\bar\xi,\bar\theta))(\theta-\bar{\theta})\right]\\
	&-\partial_{\alpha}\left[\left(\pg-\pgb\right)\ph(v_i-\bar{v}_i)+(Z_{i\alpha}-\bar{Z}_{i\alpha})(v_i-\bar{v}_i)
	+(\theta-\bar{\theta})\left(\frac{Q_{\alpha}}{\theta}-\frac{\bar{Q}_{\alpha}}{\bar{\theta}}\right)\right]\\
	&=-\partial_t\bar{\theta}\hat{\eta}(\xi,\theta|\bar{\xi},\bar{\theta})+\partial_t\bar{\xi}^B\frac{\partial \hat{\psi}}{\partial \xi^B}(\xi,\theta|\bar{\xi},\bar{\theta})
	+\partial_{\alpha}\left(\pgb\right)\left(\ph-\phb\right)(v_i-\bar{v}_i)\\
	&\;\;+\partial_{\alpha}\bar{v}_i  \left(\pg-\pgb\right)\left(\ph-\phb\right)\\
	&\;\;-\theta\bar{\theta}\left(\frac{\partial_{\alpha}v_i}{\theta}-\frac{\partial_{\alpha}\bar{v}_i}{\bar{\theta}}\right)
	\left(\frac{Z_{i\alpha}}{\theta}-\frac{\bar{Z}_{i\alpha}}{\bar{\theta}}\right)
	-\left(\bar\theta\dfrac{Q}{\theta}-\theta\dfrac{\bar Q}{\bar\theta}\right) \cdot
	\left(\frac{\nabla\theta}{\theta}-\frac{\nabla\bar{\theta}}{\bar{\theta}}\right)
	+(\theta-\bar{\theta})\left(\frac{r}{\theta}-\frac{\bar{r}}{\bar{\theta}}\right).
	\end{split}
	\end{align}
\end{footnotesize}


\section{Convergence from thermoviscoelasticity to adiabatic thermoelasticity} \label{sec4}
The aim of this section is to prove the convergence of smooth solutions of the augmented system of thermoviscoelasticity~\eqref{aug.sys} to a smooth solution of the augmented system of adiabatic thermoelasticity~\eqref{aug.sys.2}, 
as the appropriate parameters vanish. More precisely, consider a smooth solution $U=(\xi,v,\theta)^T$ of the augmented system (\ref{aug.sys}), with the constitutive relations (\ref{con.rel.aug}), for a Newtonian viscous fluid $Z=\mu\nabla v,$
$\mu=\mu(F,\theta)>0,$ Fourier heat conduction $Q=k\nabla\theta,$  $k=k(F,\theta)>0$ and $r=0.$ Also, let $\bar{U}=(\bar{\xi},\bar{v},\bar{\theta})^T$ be a smooth solution of (\ref{aug.sys.2}) and (\ref{con.rel.aug}) with $\bar{r}=0$. The goal is to show that $U=(\xi,v,\theta)^T$ converges to $\bar{U}=(\bar{\xi},\bar{v},\bar{\theta})^T$ as $\mu, k\to 0^+$ and we will accomplish this by ``measuring the distance" between the two solutions via the relative entropy.

Under these assumptions, the relative entropy identity 
(\ref{rel.en.id}) can be written as 
	\begin{align}
	\label{rel.en.id.N.F}
	\partial_t&\left[\hat{\psi}(\xi,\theta|\bar{\xi},\bar{\theta})+\frac{1}{2}|v-\bar{v}|^2+(\hat{\eta}-\bar{\hat{\eta}})(\theta-\bar{\theta})\right]\nonumber\\
	&-\partial_{\alpha}\!\left[\left(\pg\!-\!\pgb\right)\!\!\ph(v_i-\bar{v}_i)+\mu\partial_{\alpha}v_i(v_i-\bar{v}_i)
	+(\theta-\bar{\theta})k\frac{\nabla\theta}{\theta}\right]\nonumber\\
	=&-\partial_t\bar{\theta}\hat{\eta}(\xi,\theta|\bar{\xi},\bar{\theta})+\partial_t\bar{\xi^{B}}\frac{\partial \hat{\psi}}{\partial \xi^{B}}(\xi,\theta|\bar{\xi},\bar{\theta})\nonumber\\
	&+\partial_{\alpha}\left(\pgb\right)\left(\ph-\phb\right)(v_i-\bar{v}_i)\nonumber\\
	&+\partial_{\alpha}\bar{v}_i \left(\pg-\pgb\right) \left(\ph-\phb\right)\nonumber\\
	&-\bar{\theta}\mu\frac{|\nabla v|^2}{\theta}-\bar{\theta}k\frac{|\nabla\theta|^2}{\theta^2}+\mu\nabla v\cdot\nabla\bar{v}+k\frac{\nabla\theta\cdot\nabla\bar{\theta}}{\theta}.
	\end{align}

Before we proceed, let us impose the growth conditions on the extended variables $\hat{e}$ and $\hat{\psi}$. Upon setting
\begin{equation*}
|\xi|_{p,q,r}:=|F|^p+|\zeta|^q+|w|^r\;,
\end{equation*}
for the vector $\xi=(F,\zeta,w)$, we postulate the growth conditions
\begin{equation}
\label{gr.con.1}
c(|\xi|_{p,q,r}+\theta^{\ell})-c\leq\hat{e}(\xi,\theta)\leq c(|\xi|_{p,q,r}+\theta^{\ell})+c\;,
\end{equation}
\begin{equation}
\label{gr.con.2}
\lim_{|\xi|_{p,q,r}+\theta^{\ell}\to\infty}\frac{|\partial_F\hat{\psi}|+|\partial_{\zeta}\hat{\psi}|^{\frac{p}{p-1}}+|\partial_w\hat{\psi}|^{\frac{p}{p-2}}}{|\xi|_{p,q,r}+\theta^{\ell}}=0\;,
\end{equation}
and
\begin{equation}
\label{gr.con3}
\lim_{|\xi|_{p,q,r}+\theta^{\ell}\to\infty}\frac{|\partial_{\theta}\hat{\psi}|}{|\xi|_{p,q,r}+\theta^{\ell}}=0
\end{equation}
for some constant $c>0$ and $p\ge4,$ $q,r,\ell>1.$ Under theses assumptions, we establish some useful bounds on the relative entropy in Appendix~\ref{AppA} and these bounds are employed in the following theorems.
Moreover, we define the compact set
\begin{equation}
\Gamma_{M,\delta}:=\left\{(\bar{\xi},\bar{v},\bar{\theta}):: |\bar{F}|\leq M, |\bar{\zeta}|\leq M, |\bar{w}|\leq M, |\bar{v}|\leq M, \;\; 0<\delta\leq\bar{\theta}\leq M\right\}
\label{defGamma}
\end{equation}
 and also from now on we denote by $I(\xi,v,\theta|\bar{\xi},\bar{v},\bar{\theta})$ the quantity $\bar{\theta} H(U|\bar{U}),$ namely 
\begin{equation}
\label{I}
I(\xi,v,\theta|\bar{\xi},\bar{v},\bar{\theta})=\hat{\psi}(\xi,\theta|\bar{\xi},\bar{\theta})+\frac{1}{2}|v-\bar{v}|^2+(\hat{\eta}-\bar{\hat{\eta}})(\theta-\bar{\theta}).
\end{equation}
Now we are ready to prove the theorem of convergence, recovering a smooth solution $\bar{U}=(\bar{\xi},\bar{v},\bar{\theta})^T$ of  (\ref{aug.sys.2}), (\ref{con.rel.aug}), in the
adiabatic limit as $\mu,k\to 0^+$. 
Let us denote the solution to (\ref{aug.sys}) (with $r=0$), (\ref{con.rel.aug}),  \eqref{vcoeff}  by  $U_{\mu,k}=(\xi,v,\theta)^T$,
 to emphasize the dependence on the parameters $\mu$ and $k.$
To set up the problem precisely, we consider the viscosity and thermal diffusivity as depending on
parameters $\mu_0$, $k_0$ measuring their respective  amplitudes
$$
\mu = \mu (F, \theta ; \mu_0 )  \, , \quad k = k(F, \theta ; k_0 )
$$
and tending to zero, $\mu \to 0$ and $k \to 0$, in the limits $\mu_0 \to 0$ and $k_0 \to 0$, respectively.
We work in a periodic domain in space $Q_T=\mathbb{T}^d\times[0,T),$ for $T\in[0,\infty)$.
Throughout we consider the case $d=3$ (the analysis also covers the case $d=2$ with less integrability 
requirements $p \ge 2$).
The theorem reads as follows.
\begin{theorem} \label{thmconv}
	\label{Ad.Th-el}
	Let $U_{\mu,k}$ be a strong solution of the system
	\begin{align}
	\begin{split}
	\label{aug.sys.mu.k}
	\partial_t \xi^B-\partial_{\alpha}\left(\phv \right)&=0\\ 
	\partial_t v_i-\partial_{\alpha}\left(\pg\ph\right)&=\partial_{\alpha}(\mu\partial_{\alpha}v_i)\\
	\partial_t \left(\frac{1}{2}|v|^2+\hat{e}\right)-\partial_{\alpha}
	\left(\pg\ph v_i\right)&=\partial_{\alpha}(\mu v_i\partial_{\alpha}v_i)+\partial_{\alpha}(k\partial_{\alpha}\theta)
	\end{split}
	\end{align}
satisfying
	\begin{equation}
	\partial_t\hat{\eta}-\partial_{\alpha}\left(k\frac{\nabla\theta}{\theta}\right)=k\frac{|\nabla\theta|^2}{\theta^2}+\mu\frac{|\nabla v|^2}{\theta}
	\end{equation}
	accompanied by the constitutive relations (\ref{con.rel.aug}) with initial data $U_{\mu,k}^0$ and defined on a maximal domain $Q_{T^*}$. Let $\bar{U}=(\bar\xi,\bar v,\bar\theta)^T$ be a smooth solution of the system of adiabatic thermoelasticity (\ref{aug.sys.2})-(\ref{aug.sys.2.e}), (\ref{con.rel.aug})
	with $\bar{r}=0$ and initial data $\bar{U}^0$ defined on $\bar{Q}_T$, $0<T<T^*$.  Suppose that $\nabla_{\xi}^2\hat{\psi}(\xi,\theta)>0$ and $\hat{\eta}_{\theta}(\xi,\theta)>0$ and the growth
	conditions (\ref{gr.con.1}), (\ref{gr.con.2}), (\ref{gr.con3}) hold true. If $\bar{U}\in\Gamma_{M,\delta},$ for some $M,\delta>0,$ then there exists a constant $C=C(T)$ such that for $t\in[0,T]$
\begin{small}
\begin{align}
		\label{res.1} 
		\int I(U_{\mu,k}(t)|\bar{U}(t))\:dx\leq C\left(\int I(U_{\mu,k}^0|\bar{U}^0)\:dx+\int_{0}^{T}\!\!\!\!\int\!\!\left(\mu\,\theta(s)\frac{|\nabla\bar{v}(s)|^2}{\bar{\theta}(s)}+k\frac{|\nabla\bar{\theta}(s)|^2}{\bar{\theta}(s)}\right)\!dxdt\right) \, .
		\end{align}
\end{small}
Moreover, if
\begin{align}
	\label{mu.k}
	|\mu(\xi,\theta)  \theta|\leq  C \mu_0|\hat{e}(\xi,\theta)| \quad\text{and}\quad |k(\xi,\theta)|\leq C k_0|\hat{e}(\xi,\theta)|,
	\end{align}
	for some positive constants $\mu_0,\,k_0$ then whenever the data satisfy
	\begin{align}
	\label{adiab.data}
	\lim_{k_0,\mu_0\to 0}\int I(U_{\mu,k}^0|\bar{U}^0)\:dx=0\;,
	\end{align}
	we have
	\begin{align}
	\label{res.2}
	\sup_{t\in[0,T)}\int I(U_{\mu,k}(t)|\bar{U}(t))\:dx\to 0, \quad\text{as $k_0,\mu_0\to 0^+.$}
	\end{align}
\end{theorem}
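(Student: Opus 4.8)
\emph{Plan of proof.} The strategy is to integrate the relative entropy identity \eqref{rel.en.id.N.F} in space over the torus $\mathbb{T}^d$ and to run a Gronwall argument on $\mathcal{E}(t):=\int I(U_{\mu,k}(t)|\bar U(t))\,dx$, with $I$ as in \eqref{I}. Since $U_{\mu,k}$ and $\bar U$ are smooth and periodic, all divergence terms on the left of \eqref{rel.en.id.N.F} integrate to zero, so that $\tfrac{d}{dt}\mathcal{E}(t)$ equals the spatial integral of the right-hand side of \eqref{rel.en.id.N.F}. Everything then reduces to bounding that right-hand side by $C\,\mathcal{E}(t)$ plus an error term that will be shown to vanish as $\mu_0,k_0\to 0$.

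First I would dispose of the four ``hyperbolic'' relative terms $-\partial_t\bar\theta\,\hat\eta(\xi,\theta|\bar\xi,\bar\theta)$, $\partial_t\bar\xi^B\frac{\partial\hat\psi}{\partial\xi^B}(\xi,\theta|\bar\xi,\bar\theta)$, $\partial_\alpha(\pgb)(\ph-\phb)(v_i-\bar v_i)$ and $\partial_\alpha\bar v_i(\pg-\pgb)(\ph-\phb)$. Because $\bar U\in\Gamma_{M,\delta}$ is smooth, the factors $\partial_t\bar\theta$, $\partial_t\bar\xi^B$, $\partial_\alpha\bar v_i$ and $\partial_\alpha(\pgb)$ are bounded on $\bar Q_T$; the remaining factors are ``relative'' quantities which, by the pointwise estimates of Appendix~\ref{AppA} (valid under the growth conditions \eqref{gr.con.1}--\eqref{gr.con3}), and after one Young inequality for the products still carrying a factor $v-\bar v$ (whose square is already part of $I$), are dominated by $C\,I(\xi,v,\theta|\bar\xi,\bar v,\bar\theta)$ with $C=C(M,\delta)$. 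Hence these terms contribute at most $C\,\mathcal{E}(t)$ after integration.

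Next I would treat the viscous/conductive block $-\bar\theta\mu\tfrac{|\nabla v|^2}{\theta}-\bar\theta k\tfrac{|\nabla\theta|^2}{\theta^2}+\mu\nabla v\cdot\nabla\bar v+k\tfrac{\nabla\theta\cdot\nabla\bar\theta}{\theta}$. The first two terms are nonpositive (the genuine dissipation), so they are discarded; the cross terms are split by the weighted Young inequalities $\mu\nabla v\cdot\nabla\bar v\le\tfrac12\bar\theta\mu\tfrac{|\nabla v|^2}{\theta}+\tfrac12\mu\theta\tfrac{|\nabla\bar v|^2}{\bar\theta}$ and $k\tfrac{\nabla\theta\cdot\nabla\bar\theta}{\theta}\le\tfrac12\bar\theta k\tfrac{|\nabla\theta|^2}{\theta^2}+\tfrac12 k\tfrac{|\nabla\bar\theta|^2}{\bar\theta}$, after which the whole block is $\le\tfrac12\mu\theta\tfrac{|\nabla\bar v|^2}{\bar\theta}+\tfrac12 k\tfrac{|\nabla\bar\theta|^2}{\bar\theta}$ (the leftover negative multiples of $\mu|\nabla v|^2$ and $k|\nabla\theta|^2$ being discarded). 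Combining the two previous paragraphs gives $\tfrac{d}{dt}\mathcal{E}(t)\le C\,\mathcal{E}(t)+\int\big(\mu\theta\tfrac{|\nabla\bar v|^2}{\bar\theta}+k\tfrac{|\nabla\bar\theta|^2}{\bar\theta}\big)\,dx$ on $[0,T]$, and Gronwall's inequality then yields \eqref{res.1} with $C(T)=e^{CT}$.

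For \eqref{res.2} I would bound the error term in \eqref{res.1} by means of \eqref{mu.k}: since $\bar U\in\Gamma_{M,\delta}$ is smooth, $\tfrac{|\nabla\bar v|^2}{\bar\theta}$ and $\tfrac{|\nabla\bar\theta|^2}{\bar\theta}$ are bounded on $\bar Q_T$, so the error term is $\le C(\mu_0+k_0)\int_0^T\!\!\int|\hat e(\xi,\theta)|\,dx\,dt$; integrating the energy equation of \eqref{aug.sys.mu.k} over $\mathbb{T}^d$ shows $\int(\tfrac12|v|^2+\hat e)\,dx$ is conserved, which, together with the lower bound in \eqref{gr.con.1} and the coercivity of the relative entropy (Appendix~\ref{AppA}), gives $\int|\hat e|\,dx\le C$ uniformly in $\mu_0,k_0$ and $t\in[0,T]$; hence the error term is $O(\mu_0+k_0)$. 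Combined with hypothesis \eqref{adiab.data}, \eqref{res.1} then forces $\sup_{t\in[0,T)}\mathcal{E}(t)\to 0$, which is \eqref{res.2}. The step I expect to be the real obstacle is the second one: controlling the relative-stress terms by $I$ uniformly, despite $(\xi,\theta)$ ranging over an unbounded set while $(\bar\xi,\bar\theta)$ stays in $\Gamma_{M,\delta}$. This is possible because $\Phi(F)$ is a polynomial of degree $\le 3$ and $\nabla_F\Phi(F)$ of degree $\le 2$ in $F$, so $\ph-\phb=O\big((1+|F|)\,|F-\bar F|\big)$, and this, combined with the structural growth restrictions \eqref{gr.con.2}--\eqref{gr.con3} on $\nabla_\xi\hat\psi$ and $\partial_\theta\hat\psi$, is dominated by the $p\ge 4$ coercivity furnished by \eqref{gr.con.1} --- precisely the estimates isolated in Appendix~\ref{AppA}.
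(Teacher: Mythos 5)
Your proposal is correct and follows essentially the same route as the paper: integrate the relative entropy identity \eqref{rel.en.id.N.F} over the torus, invoke the Appendix~\ref{AppA} bounds \eqref{bound5}--\eqref{bound8} to dominate the four hyperbolic relative terms by $C\,I$, use (weighted) Young to split the viscous cross terms against the dissipation, discard the leftover nonpositive dissipation, apply Gronwall to get \eqref{res.1}, and for \eqref{res.2} integrate the energy equation on $\mathbb{T}^d$ to get a uniform bound on $\int(\tfrac12|v|^2+\hat e)\,dx$, combine with \eqref{gr.con.1} and \eqref{mu.k} to make the error term $O(\mu_0+k_0)$. The only cosmetic difference is that you perform Young pointwise whereas the paper applies Cauchy--Schwarz on the spatial integrals first and then Young on the resulting products; these are equivalent and yield the same bound.
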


\begin{proof} Integrating the relative entropy identity (\ref{rel.en.id.N.F}) and using the estimates (\ref{bound5}), (\ref{bound6}), (\ref{bound7}) and (\ref{bound8}) we have
\begin{small}
	\begin{align*}
	&\frac{d}{dt}\int I(\xi,v,\theta|\bar{\xi},\bar{v},\bar{\theta})\:dx+\int \bar{\theta}\left(\mu\frac{|\nabla v|^2}{\theta}+k\frac{|\nabla\theta|^2}{\theta^2}\right)\:dx\le\\
	&\:\leq \int |\partial_t\bar{\theta}| |\hat{\eta}(\xi,\theta|\bar{\xi},\bar{\theta})|
	+|\partial_t\bar{\xi}^B| \Big|\frac{\partial \hat{\psi}}{\partial \xi^B}(\xi,\theta|\bar{\xi},\bar{\theta})\Big|
	+\Big| \partial_{\alpha}\big( \pgb\big)\Big| \Big|\big(\ph-\phb\big)(v_i-\bar{v}_i)\Big|\\
	&\qquad\qquad\qquad\qquad\:\:+|\partial_{\alpha}\bar{v}_i|   \left|    \left(\pg-\pgb\right) \left(\ph-\phb\right)\right|\:dx\\
	&\:\:\:+\left(\int\bar{\theta} \mu\frac{|\nabla v|^2}{\theta}\:dx\right)^{1/2}\left(\int \mu\,\theta\frac{|\nabla\bar{v}|^2}{\bar{\theta}}\:dx\right)^{1/2}
	+\left(\int\bar{\theta} k\frac{|\nabla\theta|^2}{\theta^2}\:dx\right)^{1/2}\left(\int k\frac{|\nabla\bar{\theta}|^2}{\bar{\theta}}\:dx\right)^{1/2}\\
	&\:\leq C \int I(\xi,v,\theta|\bar{\xi},\bar{v},\bar{\theta})\:dx+\frac{1}{2}\int\bar{\theta}\left(\mu\frac{|\nabla v|^2}{\theta}+k\frac{|\nabla\theta|^2}{\theta^2}\right)\:dx
	+\frac{1}{2}\int\left(\mu\,\theta\frac{|\nabla\bar{v}|^2}{\bar{\theta}}+k \frac{|\nabla\bar{\theta}|^2}{\bar{\theta}}\right)\:dx,
	\end{align*}
\end{small}
for some constant $C=C\left(|\partial_t\bar{\theta}|,|\partial_t\bar{\xi}|,\left|\partial_{\alpha}\pgb\right|,|\partial_{\alpha}\bar{v}_i|\right).$ Therefore it holds
\begin{small}
	\begin{align*}
		\frac{d}{dt}\int I(\xi,v,\theta|\bar{\xi},\bar{v},\bar{\theta})\:dx \leq
		C \int I(\xi,v,\theta|\bar{\xi},\bar{v},\bar{\theta})\:dx
		+\frac{1}{2}\int\left(\mu\frac{\theta|\nabla\bar{v}|^2}{\bar{\theta}}+k \frac{|\nabla\bar{\theta}|^2}{\bar{\theta}}\right)\:dx.
	\end{align*}
\end{small}
Then, Gronwall's inequality implies
\begin{small}
	\begin{align}
	\begin{split}
	\label{Gronwal.adiab.}
	\int I(\xi (t) ,v(t),\theta(t)|\bar{\xi}(t),\bar{v}(t),\bar{\theta}(t))\:dx\leq &\, e^{Ct}\int I(\xi^0 ,v^0,\theta^0|\bar{\xi}^0,\bar{v}^0,\bar{\theta}^0)\,dx\\
	&+\frac{1}{2}\int_{0}^{t} e^{C(t-s)}\int\left(\mu\,\theta(s)\frac{|\nabla\bar{v}(s)|^2}{\bar{\theta}(s)}+k\frac{|\nabla\bar{\theta}(s)|^2}{\bar{\theta}(s)}\right)\:dsdx
	\end{split}
	\end{align}
\end{small}
and (\ref{res.1}) follows immediately. 

Finally to show (\ref{res.2}), we first observe that integrating (\ref{aug.sys.mu.k})$_3$ in $\mathbb{T}^d$ allows to obtain the uniform bound
\begin{align}
\label{energy.unif}
\int_{\mathbb{T}^d} \frac{1}{2}|v|^2+\hat{e}(\xi,\theta) \:dx \leq C,
\end{align}
which in turn, by (\ref{gr.con.1}) and assumption (\ref{mu.k}), implies the uniform bounds
\begin{align*}
\|\mu(\xi,\theta)\theta(x,t)\|_{L^1(\mathbb{T}^d)}\leq \mu_0 \mathcal{C} \quad\text{and}\quad \|k(\xi,\theta)\|_{L^1(\mathbb{T}^d)}\leq k_0 \mathcal{C},
\end{align*}
for all $t\in[0,T]$ and some $\mathcal{C}>0.$ Consequentially, the last term on the right hand-side of (\ref{res.1}) is also uniformly bounded
$$\int_{0}^{T}\int\left(\mu\,\theta(s)\frac{|\nabla\bar{v}(s)|^2}{\bar{\theta}(s)}+k\frac{|\nabla\bar{\theta}(s)|^2}{\bar{\theta}(s)}\right)\!dxdt
\leq (\mu_0+k_0){\mathcal{C}}^{\prime}$$
and given (\ref{adiab.data}), the convergence result (\ref{res.2}) follows by taking the limit $k_0,\mu_0\to 0$ in (\ref{res.1}).
\end{proof}

\begin{remark}\rm
1.  The Hypothesis \eqref{mu.k} on the viscosity and heat conductivity coefficients: 
(a) It indicates that $\mu$, $k$ tend to zero as the parameters $\mu_0, k_0 \to 0$. 
(b) The dependence on $(F, \theta)$ satisfies the uniform bounds \eqref{mu.k} in
terms of the internal energy. 
To our knowledge there is no sufficient available information regarding the behavior of such constitutive functions for
large deformations and temperatures $(F, \theta)$ to gauge how meaningful such technical hypotheses  are. 
Nevertheless, under the growth hypothesis \eqref{gr.con.1} the constant functions
$\mu = \mu_0$ and $k = k_0$ satisfy \eqref{mu.k}.

2.  Although Theorem \ref{thmconv} is stated at the level of the augmented system, 
this convergence result holds when comparing thermoviscoelasticity (\ref{sys1})-(\ref{entr.prod}) with 
adiabatic thermoelasticity (\ref{sys1adiab})-(\ref{S2: entropyadiab}).
\end{remark}

\section{Convergence from thermoviscoelasticity to thermoelasticity in the zero viscosity limit} \label{sec5}
Here, we consider again a smooth solution $U_{\mu,k}=(\xi,v,\theta)^T$ of (\ref{aug.sys}), (\ref{con.rel.aug}), with $Z=\mu\nabla v,$
$\mu=\mu(F,\theta)>0,$ $Q=k\nabla\theta,$  $k=k(F,\theta)>0$ and $r=0.$ The aim is to compare $U_{\mu,k}=(\xi,v,\theta)^T$ with a solution
$\bar{U}_k=(\bar{\xi},\bar{v},\bar{\theta})^T$ of (\ref{aug.sys})-(\ref{con.rel.aug}) with $\bar{Z}=0,$ $\bar{Q}=\bar{k}\nabla\bar{\theta},$ $\bar{k}=\bar{k}(\bar{F},\bar{\theta})>0$ and $\bar{r}=0.$
Applying these assumptions, the relative entropy identity (\ref{rel.en.id}) becomes
\begin{footnotesize}
	\begin{align}
	\label{rel.en.id.heat}
	\begin{split}
	\partial_t&\left[\hat{\psi}(\xi,\theta|\bar{\xi},\bar{\theta})+\frac{1}{2}|v-\bar{v}|^2+(\hat{\eta}-\bar{\hat{\eta}})(\theta-\bar{\theta})\right]\\
	&-\partial_{\alpha}\!\left[\left(\pg\!-\!\pgb\right)\!\!\ph(v_i-\bar{v}_i)+\mu\partial_{\alpha}v_i(v_i-\bar{v}_i)
	+(\theta-\bar{\theta})\left(k\frac{\nabla\theta}{\theta}-\bar{k}\frac{\nabla\bar{\theta}}{\bar{\theta}}\right)\right]\\
	&+\bar{\theta}\mu\frac{|\nabla v|^2}{\theta} +\bar{\theta}k\left(\frac{\nabla\bar{\theta}}{\bar{\theta}}-\frac{\nabla\theta}{\theta}\right)^2\\
	&=-\partial_t\bar{\theta}\hat{\eta}(\xi,\theta|\bar{\xi},\bar{\theta})+\partial_t\bar{\xi}^B \frac{\partial \hat{\psi}}{\partial \xi^B}(\xi,\theta|\bar{\xi},\bar{\theta})
	+\partial_{\alpha}\pgb\left(\ph-\phb\right)(v_i-\bar{v}_i)\\
	&+\partial_{\alpha}\bar{v}_i  \left(\pg-\pgb\right)  \left(\ph-\phb\right)
	+\mu\nabla v\cdot\nabla\bar{v} +\left(\frac{\nabla\bar{\theta}}{\bar{\theta}}-\frac{\nabla\theta}{\theta}\right)\frac{\nabla\bar{\theta}}{\bar{\theta}}(\bar{\theta}k-\theta\bar{k}).
	\end{split}
	\end{align}
\end{footnotesize}

We work as in the previous section, using the results of Lemmas~\ref{lemma1},~\ref{lemma2} and~\ref{lemma3}, to prove convergence of smooth solutions for the system of thermoviscoelasticity
to solutions of thermoelasticity in the limit as $\mu\to 0^+.$
\begin{theorem}
	\label{Th-el}
	Let $U_{\mu,k}$ be a strong solution of (\ref{aug.sys.mu.k}) under the constitutive relations (\ref{con.rel.aug})
	with initial data $U_{\mu,k}^0$ and let $\bar{U}_k$ be a smooth solution of (\ref{aug.sys}) subject to (\ref{con.rel.aug})
	with $\bar{Z}=0,$ $\bar{Q}=\bar{k}\nabla\bar{\theta},$ $\bar{r}=0$ and initial data $\bar{U}^0.$ Suppose that $\nabla_{\xi}^2\hat{\psi}(\xi,\theta)>0$ and $\hat{\eta}_{\theta}(\xi, \theta)>0$ and the growth
	conditions (\ref{gr.con.1})--(\ref{gr.con3}) hold true. Suppose additionally that 
	\begin{align}
	\begin{split}
	\label{mu_0.k.theta}
	0<\underline{\delta}&\leq\theta(x,t),\\
	k(\xi,\theta)+\frac{\theta^2}{k(\xi,\theta)}&\leq C\:\hat{e}(\xi,\theta),\\
	|\mu(\xi,\theta)  \theta| &\leq  C \mu_0|\hat{e}(\xi,\theta)|,
	\end{split}
	\end{align}
for some constant $C=C(k_0).$
	If $\bar{U}_k\in\Gamma_{M,\delta},$ for all $(x,t)\in Q_T$ and some $M,\delta>0$, then there exists a constant 
	$C=C(T,\underline{\delta})$ such that
	\begin{align}
	\label{res.1.th.01}
	\int I(U_{\mu,k}(t)| \bar{U}_k (t))\:dx\leq C\left(\int I(U_{\mu,k}^0|\bar{U}_k^0)\:dx+\int_{0}^{T}\!\!\!\!\int\!\!\left(\mu\,\theta(s)\frac{|\nabla\bar{v}(s)|^2}{\bar{\theta}(s)}\right)dx\,dt\right).\nonumber\\
	\end{align}
	Also, 
	whenever
	\begin{align*}
	\lim_{\mu_0\to 0}\int I(U_{\mu,k}^0|\bar{U}_k^0)\:dx=0
	\end{align*}
	we have
	\begin{align}
	\label{res.2.th}
	\sup_{t\in[0,T)}\int I(U_{\mu,k}(t)|\bar{U}_k(t))\:dx\to 0, \quad\text{as $\mu_0\to 0^+.$}
	\end{align}
\end{theorem}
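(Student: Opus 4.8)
The plan is to mimic the proof of Theorem~\ref{thmconv}, with the main novelty being the careful treatment of the heat-conduction terms, since now the comparison solution $\bar U_k$ is itself a conductor of heat rather than adiabatic. First I would integrate the relative entropy identity \eqref{rel.en.id.heat} over $\mathbb{T}^d$. The left-hand side produces $\frac{d}{dt}\int I(U_{\mu,k}|\bar U_k)\,dx$ (the flux term integrates to zero by periodicity) together with the two manifestly nonnegative dissipation contributions $\int \bar\theta\mu\frac{|\nabla v|^2}{\theta}\,dx$ and $\int \bar\theta k\bigl(\frac{\nabla\bar\theta}{\bar\theta}-\frac{\nabla\theta}{\theta}\bigr)^2 dx$, which I keep on the left to absorb error terms.

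Next I would estimate each term on the right-hand side. The first two, $-\partial_t\bar\theta\,\hat\eta(\xi,\theta|\bar\xi,\bar\theta)$ and $\partial_t\bar\xi^B\frac{\partial\hat\psi}{\partial\xi^B}(\xi,\theta|\bar\xi,\bar\theta)$, together with the two quadratic-in-$(\Phi(F)-\Phi(\bar F))$ null-Lagrangian terms, are handled exactly as in Section~\ref{sec4}: invoking $\bar U_k\in\Gamma_{M,\delta}$, the smoothness of $\bar U_k$, the convexity hypotheses $\nabla_\xi^2\hat\psi>0$, $\hat\eta_\theta>0$, and the bounds of Appendix~\ref{AppA} (Lemmas~\ref{lemma1}--\ref{lemma3}), one bounds them by $C\int I(U_{\mu,k}|\bar U_k)\,dx$. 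For the viscous cross term $\mu\nabla v\cdot\nabla\bar v$ I would use Cauchy--Schwarz to split it as $\tfrac12\bar\theta\mu\frac{|\nabla v|^2}{\theta}+\tfrac12\mu\frac{\theta|\nabla\bar v|^2}{\bar\theta}$, sending the first piece into the left-hand dissipation and leaving the second as a source term controlled by the hypothesis $|\mu\theta|\le C\mu_0|\hat e|$. The genuinely new term is the last one, $\bigl(\frac{\nabla\bar\theta}{\bar\theta}-\frac{\nabla\theta}{\theta}\bigr)\frac{\nabla\bar\theta}{\bar\theta}(\bar\theta k-\theta\bar k)$; here I would again use Cauchy--Schwarz, bounding it by $\tfrac12\bar\theta k\bigl(\frac{\nabla\bar\theta}{\bar\theta}-\frac{\nabla\theta}{\theta}\bigr)^2+\tfrac12\frac{(\bar\theta k-\theta\bar k)^2}{\bar\theta k}\frac{|\nabla\bar\theta|^2}{\bar\theta^2}$, so the first piece is absorbed by the left-hand heat dissipation, and the prefactor $\frac{(\bar\theta k-\theta\bar k)^2}{\bar\theta k}$ in the second is where the structural assumption $k+\frac{\theta^2}{k}\le C\hat e$ and the lower bound $\theta\ge\underline\delta$ on the smallness of $\bar U_k-U_{\mu,k}$ and boundedness of $\bar U_k$ must be combined to get a controllable remainder.

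The hard part will be precisely this heat-conduction remainder: unlike the adiabatic case, the dissipation term only controls the \emph{difference} $\frac{\nabla\bar\theta}{\bar\theta}-\frac{\nabla\theta}{\theta}$ weighted by $k$, and the leftover factor $\frac{(\bar\theta k - \theta\bar k)^2}{\bar\theta k}$ does not vanish as $\mu_0\to0$ — it is only \emph{bounded}. The point is that it does not need to vanish: it multiplies $\frac{|\nabla\bar\theta|^2}{\bar\theta^2}$, a smooth bounded quantity of the fixed comparison solution, and so contributes a term of the form $C\int I\,dx$ after using the expansion $\bar\theta k - \theta\bar k = \bar\theta(k-\bar k) - (\theta-\bar\theta)\bar k$ together with Lipschitz estimates on $k$, $\bar k$ (from the growth hypotheses and $\Gamma_{M,\delta}$), reducing $(\bar\theta k-\theta\bar k)^2$ to something controlled by $|\xi-\bar\xi|^2 + |\theta-\bar\theta|^2$ and hence by $I(U_{\mu,k}|\bar U_k)$ via the Appendix bounds and $\theta\ge\underline\delta$. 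I would need to be careful that the $\frac{1}{\bar\theta k}$ denominator does not blow up, which is why the hypothesis $k + \frac{\theta^2}{k}\le C\hat e$ (bounding $k$ from below away from zero in a suitable integrated sense, together with $\bar k\ge\delta'>0$ on the compact set) is essential.

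After these estimates the inequality reads $\frac{d}{dt}\int I(U_{\mu,k}|\bar U_k)\,dx \le C(T,\underline\delta)\int I(U_{\mu,k}|\bar U_k)\,dx + \tfrac12\int \mu\frac{\theta|\nabla\bar v|^2}{\bar\theta}\,dx$, and Gronwall's inequality yields \eqref{res.1.th.01}. For the convergence statement \eqref{res.2.th}, I would integrate \eqref{aug.sys.mu.k}$_3$ over $\mathbb{T}^d$ to get the uniform energy bound $\int(\tfrac12|v|^2+\hat e)\,dx\le C$, then use $|\mu\theta|\le C\mu_0|\hat e|$ to conclude $\|\mu\theta\|_{L^1}\le \mu_0\mathcal C$, so the source term in \eqref{res.1.th.01} is $O(\mu_0)$; combined with the hypothesis on the initial data this forces $\sup_{t\in[0,T)}\int I(U_{\mu,k}(t)|\bar U_k(t))\,dx\to0$ as $\mu_0\to0^+$. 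I expect the argument to be essentially parallel to Theorem~\ref{thmconv} once the heat-flux remainder is dispatched; the $k_0$-dependence of the constant $C$ is benign since $k_0$ is held fixed in the zero-viscosity limit.
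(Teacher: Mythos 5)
Your proposal follows essentially the same route as the paper's proof: integrate the relative entropy identity \eqref{rel.en.id.heat}, keep the two nonnegative dissipation integrals on the left, bound the first four right-hand terms by $C\int I\,dx$ via the Appendix estimates \eqref{bound5}--\eqref{bound8}, absorb half of the viscous cross term into the viscous dissipation, split the heat-flux remainder by Cauchy--Schwarz with the other half of the heat dissipation absorbed on the left, and finish with Gronwall and the $L^1$-bound on $\mu\theta$ coming from the conserved energy and hypothesis \eqref{mu_0.k.theta}$_3$. You also identify the correct prefactor $\frac{|\nabla\bar\theta|^2}{\bar\theta}\cdot\frac{\theta^2}{k}\left(\frac{k}{\theta}-\frac{\bar k}{\bar\theta}\right)^2$ as the crux; the paper disposes of it in one stroke via Lemma~\ref{lemma3} (estimate \eqref{bound10}), which bounds $\frac{\theta^2}{k}\left|\frac{k}{\theta}-\frac{\bar k}{\bar\theta}\right|^2$ directly by $CI$.

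One small inaccuracy in your description of the mechanism: you explain the control of $(\bar\theta k-\theta\bar k)^2$ as reducing, via Lipschitz estimates on $k,\bar k$, to $|\xi-\bar\xi|^2+|\theta-\bar\theta|^2$, and you describe $k+\frac{\theta^2}{k}\le C\hat e$ as a lower bound on $k$. That picture only works on the compact region. For large $(\xi,\theta)$ the quantity is not small, and Lemma~\ref{lemma3} instead uses the \emph{growth} side of the hypothesis: since $\bar k/\bar\theta$ is bounded on $\Gamma_{M,\delta}$, one has $\frac{\theta^2}{k}\left(\frac{k}{\theta}-\frac{\bar k}{\bar\theta}\right)^2 \le C\left(k+\frac{\theta^2}{k}\right) \le C\hat e(\xi,\theta)$, and then \eqref{bound4} shows $\hat e\lesssim I$ outside a compact. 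The lower bound $\theta\ge\underline\delta$ enters only on the compact region to keep $\theta^2/k$ and the difference quotient Lipschitz. So both sides of the structural inequality matter, and they play different roles in the two regimes. With that clarification, your argument matches the paper's.
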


\begin{proof} Following along the lines of the proof of Theorem \ref{Ad.Th-el}, we integrate \eqref{rel.en.id.heat} and use (\ref{bound5}), (\ref{bound6}), (\ref{bound7}), (\ref{bound8}), (\ref{bound10}) and (\ref{mu_0.k.theta}) to get
\begin{small}
	\begin{align*}
	&\frac{d}{dt}\int I(\xi,v,\theta|\bar{\xi},\bar{v},\bar{\theta})\:dx
	+\int \bar{\theta}\left(\mu\frac{|\nabla v|^2}{\theta}+k\left(\frac{\nabla\bar{\theta}}{\bar{\theta}}-\frac{\nabla\theta}{\theta}\right)^2\right)\:dx\\
	&\:\leq \int |\partial_t\bar{\theta}| |\hat{\eta}(\xi,\theta|\bar{\xi},\bar{\theta})|+|\partial_t\bar{\xi}^B| \left|\frac{\partial \hat{\psi}}{\partial \xi^B}(\xi,\theta|\bar{\xi},\bar{\theta})\right|
	+\left|\partial_{\alpha}\pgb\right| \left|\left(\ph-\phb\right)(v_i-\bar{v}_i)\right|\\
	&\:+|\partial_{\alpha}\bar{v}_i| \left| \left(\pg-\pgb\right)  \left(\ph-\phb\right)  \right|\:dx\\
	&\:+\left(\int\bar{\theta} \mu\frac{|\nabla v|^2}{\theta}\:dx\right)^{1/2}\!\!\left(\int \mu\frac{\theta|\nabla\bar{v}|^2}{\bar{\theta}}\:dx\right)^{1/2}\!\!\!\! \\
	&+\left(\int\bar{\theta} k\left(\frac{\nabla\bar{\theta}}{\bar{\theta}}-\frac{\nabla\theta}{\theta}\right)^2\:dx\right)^{1/2}\!\!
	\left(\int\frac{|\nabla\bar{\theta}|^2}{\bar{\theta}}\frac{\theta^2}{k}\left(\frac{k}{\theta}-\frac{\bar{k}}{\bar{\theta}}\right)^2\:dx\right)^{1/2}\\
	&\:\leq C^{\prime} \int I(\xi,v,\theta|\bar{\xi},\bar{v},\bar{\theta})\:dx
	+\frac{1}{2}\int\bar{\theta}\left(\mu\frac{|\nabla v|^2}{\theta}+k\left(\frac{\nabla\bar{\theta}}{\bar{\theta}}-\frac{\nabla\theta}{\theta}\right)^2\right)\:dx\\
	&\:+\frac{1}{2}\int\left(\mu\,\theta\frac{|\nabla\bar{v}|^2}{\bar{\theta}}+\frac{|\nabla\bar{\theta}|^2}{\bar{\theta}}\frac{\theta^2}{k}\left(\frac{k}{\theta}-\frac{\bar{k}}{\bar{\theta}}\right)^2\right)\:dx\\
	&\:\leq C \int I(\xi,v,\theta|\bar{\xi},\bar{v},\bar{\theta})\:dx
	+\frac{1}{2}\int\bar{\theta}\left(\mu\frac{|\nabla v|^2}{\theta}+k\left(\frac{\nabla\bar{\theta}}{\bar{\theta}}-\frac{\nabla\theta}{\theta}\right)^2\right)\:dx
	+\frac{1}{2}\int\left(\mu\,\theta\frac{|\nabla\bar{v}|^2}{\bar{\theta}}\right)\:dx,
	\end{align*}
\end{small}for some constant $C=C\left(|\partial_t\bar{\theta}|,|\partial_t\bar{\xi}|,\left|\partial_{\alpha}\pgb\right|,|\partial_{\alpha}\bar{v}_i|,\underline{\delta},k_0\right).$
Applying Gronwall's inequality we obtain
	\begin{small}
	\begin{align*}
	\int I(\xi,v,\theta|\bar{\xi},\bar{v},\bar{\theta})\:dx\leq e^{Ct}\int\left(\hat{\psi}(\xi^0,\theta^0|\bar{\xi}^0,\bar{\theta}^0)+\frac{1}{2}|v^0-\bar{v}^0|^2+(\hat{\eta}^0-\bar{\hat{\eta}}^0)
	(\theta^0-\bar{\theta}^0)\right)\:dx\\
	+\frac{1}{2}\int_{0}^{t} e^{C(t-s)}\int\mu\,\theta(s)\frac{|\nabla\bar{v}(s)|^2}{\bar{\theta}(s)}\:dsdx
	\end{align*}
\end{small}
which gives (\ref{res.1.th.01}). The result (\ref{res.2.th}), follows from (\ref{mu_0.k.theta})$_1,$ by arguing exactly like Theorem \ref{Ad.Th-el} and by taking the limit $\mu_0\to 0$ in (\ref{res.1.th.01}).
\end{proof}

\section{Uniqueness of smooth solutions of Adiabatic Thermoelasticity in the class of entropy weak solutions} \label{sec6}

In this section we consider an entropy weak solution of the adiabatic thermoelasticity system \eqref{sys1adiab}, \eqref{def.grad},
\eqref{con.rel.aug}. The solution $(F, v,\theta)^T$ satisfies the weak form of the conservation laws \eqref{sys1adiab} and the weak
form of the entropy inequality
\begin{align}
\label{entr.in}
\partial_t \big ( \hat{\eta} (\Phi (F), \theta)  \big ) \geq\frac{r}{\theta}.
\end{align}

We employ \cite[Lemma 5]{MR1831179} which states:
\begin{lemma} \label{lemma.weak.minors}
Let $y : [0,\infty) \times \mathbb{T}^3 \to \mathbb{R}^3$ with regularity 
$y \in W^{1,\infty}(L^2(\mathbb{T}^3)) \cap L^\infty (W^{1,p} (\mathbb{T}^3))$ with $p \ge 4$,
and let $v = \partial_t y$, $F = \nabla y$. Then $(F, \cof F, \det F)$ satisfy \eqref{geomcons} in the sense of distributions.
\end{lemma}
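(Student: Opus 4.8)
\emph{Proof proposal.} The plan is to prove the identities \eqref{geomcons} by spatial mollification, reducing to the classical transport identities \eqref{nul-Lang1} valid for smooth deformations and then passing to the limit. First I would fix a standard mollifier $\rho_\varepsilon$ on $\mathbb{T}^3$ and set $y^\varepsilon = y \ast \rho_\varepsilon$, $F^\varepsilon = \nabla y^\varepsilon = F \ast \rho_\varepsilon$, $v^\varepsilon = \del_t y^\varepsilon = v \ast \rho_\varepsilon$. For each fixed $t$ the field $y^\varepsilon(\cdot,t)$ is $C^\infty$ on $\mathbb{T}^3$, and $t \mapsto y^\varepsilon(\cdot,t)$ is Lipschitz with values in $C^\infty(\mathbb{T}^3)$; hence $F^\varepsilon$ is an exact gradient, so the Piola identities $\del_\alpha\big(\tfrac{\del \Phi^B}{\del F_{i\alpha}}(F^\varepsilon)\big)=0$ hold (this is \eqref{null-Lan}), and $\del_t F^\varepsilon_{i\alpha} = \del_\alpha v^\varepsilon_i$ for a.e.\ $t$. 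The polynomial chain rule then gives, exactly as in \eqref{nul-Lang1},
\begin{equation*}
\del_t \Phi^B(F^\varepsilon) = \tfrac{\del \Phi^B}{\del F_{i\alpha}}(F^\varepsilon)\,\del_t F^\varepsilon_{i\alpha} = \tfrac{\del \Phi^B}{\del F_{i\alpha}}(F^\varepsilon)\,\del_\alpha v^\varepsilon_i = \del_\alpha\!\big(\tfrac{\del \Phi^B}{\del F_{i\alpha}}(F^\varepsilon)\,v^\varepsilon_i\big)
\end{equation*}
for a.e.\ $t$, hence in $\mathcal{D}'((0,\infty)\times\mathbb{T}^3)$; the components corresponding to $\cof F^\varepsilon$ and $\dt F^\varepsilon$ are precisely \eqref{geomcons} with $y$ replaced by $y^\varepsilon$ (the first nine components merely record $\del_t\nabla y^\varepsilon = \nabla\del_t y^\varepsilon$).

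Next I would pass to the limit $\varepsilon \to 0$ in the distributional form of these identities tested against $\varphi \in C_c^\infty([0,\infty)\times\mathbb{T}^3)$. For a.e.\ $t$ one has $F^\varepsilon(\cdot,t) \to F(\cdot,t)$ strongly in $L^p(\mathbb{T}^3)$ and $v^\varepsilon(\cdot,t) \to v(\cdot,t)$ strongly in $L^2(\mathbb{T}^3)$; since $\cof$ is locally Lipschitz as a map $L^p \to L^{p/2}$ and $\dt$ as a map $L^p \to L^{p/3}$ (for $p\ge 4$), this yields $\cof F^\varepsilon \to \cof F$ in $L^{p/2}$ and $\dt F^\varepsilon \to \dt F$ in $L^{p/3}$ for a.e.\ $t$. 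The uniform bounds $F \in L^\infty(L^p)$, $v \in L^\infty(L^2)$ let dominated convergence upgrade these to strong convergence in $L^q_tL^\bullet_x$ on $[0,T]$ for any finite $q$, which suffices to pass to the limit in the linear terms $\int \Phi^B(F^\varepsilon)\,\del_t\varphi$. For the flux terms I would estimate, via the generalized H\"older inequality,
\begin{equation*}
\big\|(\cof F^\varepsilon)_{i\alpha} v^\varepsilon_i - (\cof F)_{i\alpha} v_i\big\|_{L^1_x} \le \|\cof F^\varepsilon - \cof F\|_{L^{p/2}_x}\|v^\varepsilon\|_{L^2_x} + \|\cof F\|_{L^{p/2}_x}\|v^\varepsilon - v\|_{L^2_x},
\end{equation*}
which tends to $0$ for a.e.\ $t$ precisely because $\tfrac{2}{p}+\tfrac12\le 1$, i.e.\ $p\ge 4$; a similar but less restrictive estimate handles $\epsilon_{ijk}\epsilon_{\alpha\beta\gamma}F^\varepsilon_{j\beta}v^\varepsilon_i$, and dominated convergence in $t$ again gives convergence of $\int(\text{flux}^\varepsilon)\,\del_\alpha\varphi$. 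Taking $\varepsilon\to 0$ thus yields \eqref{geomcons} for $(F,\cof F,\dt F)$ in $\mathcal{D}'$.

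The hard part is the determinant equation: its flux $(\cof F)_{i\alpha}v_i$ is only borderline integrable, lying in $L^\infty(L^1)$ exactly at $p=4$, so one must be careful to obtain \emph{strong} (not merely weak) convergence of the mollified cofactors and of the product $\cof F^\varepsilon\,v^\varepsilon$ in $L^1$, and it is this step that pins down the hypothesis $p\ge 4$. Everything else --- the Nemytskii continuity of $\cof$ and $\dt$ in the relevant Lebesgue spaces, the Piola identity for $\nabla y^\varepsilon$, and the chain rule for the Lipschitz-in-time mollified fields --- is routine. Alternatively one may simply invoke \cite[Lemma 5]{MR1831179}, whose proof follows exactly this mollification scheme. \qed
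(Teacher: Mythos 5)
The paper does not supply its own proof of this lemma; it simply invokes \cite[Lemma 5]{MR1831179}, whose argument is precisely the spatial mollification scheme you describe (regularize $y$ in $x$, apply the Piola identity and the a.e.\ chain rule to the Lipschitz-in-time smooth fields, then pass to the limit via H\"older and dominated convergence, with $p\ge 4$ pinned by the borderline flux $(\cof F)_{i\alpha}v_i\in L^\infty(L^1)$). Your proposal is correct and coincides with the proof in that cited reference.
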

\noindent
The regularity $p \ge 4$ is necessary in the case of $y : [0,\infty) \times \mathbb{T}^3 \to \mathbb{R}^3$. For maps 
$y : [0,\infty) \times \mathbb{T}^2 \to \mathbb{R}^2$ it is replaced by $p \ge 2$.

Using the lemma we conclude that, within the regularity framework of Lemma  \ref{lemma.weak.minors}, 
the extended function  $(\xi, v, \theta)^T$ with $\xi = \Phi (F)$ 
is a weak solution of the augmented thermoelasticity system,
\begin{align}
\label{aug.sys.ws}
&\partial_t \xi^B-\partial_{\alpha}\left(\phv \right)=0 \nonumber\\ 
&\partial_t v_i-\partial_{\alpha}\left(\sit\right)=0\\
&\partial_t \left(\frac{1}{2}|v|^2+\hat{e}(\xi,\theta)\right)-\partial_{\alpha}\left(\sit v_i\right)=r, \nonumber
\end{align}
and satisfies the entropy inequality
\begin{align}
\label{entr.in.ws}
\partial_t\hat{\eta} (\xi, \theta) \geq\frac{r}{\theta}.
\end{align}
in the sense of distributions.

From now on we restrict to weak entropic solutions of the augmented system (\ref{aug.sys.ws}) :
\begin{definition} \label{def.entr.ws}
	A weak solution to (\ref{aug.sys.ws}) consists of a set of functions $\xi:=(F,\zeta,w)\in L^{\infty}(L^p)\times L^{\infty}(L^q)\times L^{\infty}(L^r),$
	$v\in L^{\infty}(L^2)$ and $\theta\in L^{\infty}(L^{\ell}),$ with
        $$
        \Sigma_{i \alpha} =  \sit \in L^1(Q_T)  \, , \quad \sit v_i \in L^1 (Q_T)
        $$
	which satisfy (\ref{aug.sys.ws}) in the sense of 
	distributions. The solution is said to be entropy weak solution, if in addition, satisfies the inequality (\ref{entr.in.ws}) in the sense of distributions.
\end{definition}
We require the exponents $p\geq 4,$ because, in view of Lemma \ref{lemma.weak.minors}, the quantities $\mathrm{cof}F$, $\det F$ 
satisfy the weak form of the conservation laws \eqref{geomcons}, what allows to extend an entropy weak solution 
of \eqref{sys1adiab}, \eqref{entr.in} to a weak solution of the augmented system \eqref{aug.sys.ws}
satisfying the entropy inequality \eqref{entr.in.ws}.

As before, a strong solution (typically in $W^{1,\infty}$) of (\ref{aug.sys.ws})  satisfies the entropy identity
\begin{align}
\label{entr.eq.ws}
\partial_t\hat{\eta}=\frac{r}{\theta}.
\end{align}
Consider an entropy weak solution $(\xi,v,\theta)^T$ and a strong solution $(\bar{\xi},\bar{v},\bar{\theta})^T$ and write the difference of the weak form of
equations (\ref{aug.sys.ws}) to obtain the following three integral identities
\begin{align}
\int(\xi^B-\bar{\xi}^B)(x,0)\phi_1(x,0)\:dx &+\int_0^T \int(\xi^B-\bar{\xi}^B)\partial_t\phi_1\:dx\:dt 
\nonumber
\\
&=\int_0^T \int \left(\phv-\phbv\right)\partial_{\alpha}\phi_1\:dx\:dt,
\label{eq1.wk}
\end{align}

\begin{align}
\int(v_i-\bar{v_i})&(x,0)\phi_2(x,0)\:dx +\int_0^T\int (v_i-\bar{v_i})\partial_t\phi_2\:dx\:dt
\nonumber \\
&=\int_0^T\int \left(\sit-\sitb\right)\partial_{\alpha}\phi_2\:dx\:dt\;,
\label{eq2.wk}
\end{align}
and
	\begin{align}
	\label{eq3.wk}
	\int &\left(\frac{1}{2}|v|^2+\hat{e}-\frac{1}{2}|\bar{v}|^2-\bar{\hat{e}}\right)(x,0)\phi_3(x,0)\:dx +
	\int_0^T \int \left(\frac{1}{2}|v|^2+\hat{e}-\frac{1}{2}|\bar{v}|^2-\bar{\hat{e}}\right)\partial_t\phi_3\:dx\:dt\nonumber\\
	&=\int_0^T \int \left(\sit v_i-\sitb\bar{v_i}\right)\partial_{\alpha}\phi_3\:dx\:dt \nonumber\\
	&- \int_0^T \int (r-\bar{r})\phi_3\:dx\:dt,
	\end{align}
for any $\phi_i\in C^1_c(Q_T)$, $i=1, 2, 3$. Similarly, testing the difference of (\ref{entr.in.ws}) and (\ref{entr.eq.ws}) against $\phi_4\in C^1_c(Q_T),$  with $\phi_4\ge 0$, we have
\begin{align}
\label{entr1.wka}
-\int(\hat{\eta}-\bar{\hat{\eta}})(x,0)\phi_4(x,0)\:dx-  \int_0^T\int(\hat{\eta}-\bar{\hat{\eta}})\partial_t\phi_4\:dx\:dt\geq \int_0^T\int\left(\frac{r}{\theta}-\frac{\bar{r}}{\bar{\theta}}\right)\phi_4\:dx\:dt.
\end{align}
We then choose $(\phi_1,\phi_2,\phi_3)=-\bar\theta\,G(\bar{U})\varphi(t)=( -\pgb ,-\bar{v}_i,1)^T \varphi(t)$, for some $\varphi\in C_0^1\big ([0,T) \big )$, thus (\ref{eq1.wk}), (\ref{eq2.wk}) and (\ref{eq3.wk}) become
\begin{small}
	\begin{align}
	\label{eq1phi.wk}
	\int &\left(-\pgb(\xi^B-\bar{\xi}^B)\right)(x,0)\varphi(0)\:dx +\int_0^T\int  -\pgb(\xi^B-\bar{\xi}^B)\varphi^{\prime}(t)\:dx\:dt \nonumber\\
	&=-  \int_0^T\int \left[\partial_t\Big(-\pgb\Big)(\xi^B-\bar{\xi}^B)+\partial_{\alpha}\Big(\pgb\Big)\Big(\phv-\phbv\Big)\right]\varphi dxdt \, ,
	\end{align}
\end{small}
\begin{align}
\label{eq2phi.wk}
\int(-\bar{v_i}&(v_i-\bar{v_i}))(x,0)\varphi(0)\:dx +\int_0^T\int  -\bar{v_i}(v_i-\bar{v_i})\varphi^{\prime}(t)\:dx\:dt\nonumber\\
&=-\int_0^T\int\left[-\partial_{\alpha}\left(\sitb\right)(v_i-\bar{v_i})\right.\nonumber\\
&\qquad\left.+\partial_{\alpha}\bar{v_i}  \left(\sit-\sitb\right)   \right]\varphi\:dx\:dt\;,
\end{align} 
and
\begin{align}
\label{eq3phi.wk}
\int\left(\frac{1}{2}|v|^2+\hat{e}-\frac{1}{2}|\bar{v}|^2-\bar{\hat{e}}\right)(x,0)\varphi(0)\:dx +& 
\int_0^T\int \left(\frac{1}{2}|v|^2+\hat{e}-     \frac{1}{2}|\bar{v}|^2-\bar{\hat{e}}\right)\varphi^{\prime}(t)\:dx\:dt\nonumber\\  
&=-\int_0^T\int (r-\bar{r})\varphi(t)\:dx\:dt.
\end{align}

For inequality (\ref{entr1.wka}), we choose accordingly $\phi_4:=\bar{\theta}\varphi(t)\geq 0$, $\varphi \geq 0$ so that
\begin{align}
\label{entr2.wk}
-\int(\bar{\theta}(\hat{\eta}-\bar{\hat{\eta}}))(x,0)\varphi(0)\:dx-& \int_0^T\int  \bar{\theta}(\hat{\eta}-\bar{\hat{\eta}})\varphi^{\prime}(t)\:dx\:dt\nonumber\\
&\geq \int_0^T \int \left[\partial_t\bar{\theta}(\hat{\eta}-\bar{\hat{\eta}})+\bar{\theta}\left(\frac{r}{\theta}-\frac{\bar{r}}{\bar{\theta}}\right)\right]\varphi(t)\:dx\:dt.
\end{align}
Adding together (\ref{eq1phi.wk}), (\ref{eq2phi.wk}), (\ref{eq3phi.wk}) and (\ref{entr2.wk}) and recalling (\ref{I}), we arrive to the integral relation
\begin{align}
\label{rel.entr.wk}
\int &I(\xi,v,\theta|\bar{\xi},\bar{v},\bar{\theta})(x,0)\varphi(0)\:dx+ \int_0^T \int I(\xi,v,\theta|\bar{\xi},\bar{v},\bar{\theta})\varphi^{\prime}(t)\:dx\:dt \nonumber \\ 
&\!\!\!\geq-  \!\!\int_0^T  \int \!\Big[\!\!-\partial_t\bar{\theta}\hat{\eta}(\xi,\theta|\bar{\xi},\bar{\theta})+\partial_t\bar{\xi}^B \frac{\partial \hat{\psi}}{\partial \xi^B}(\xi,\theta|\bar{\xi},\bar{\theta})
\nonumber\\
&\; +\partial_{\alpha}\left(\pgb\right)\!\Big(\ph-\phb\Big)\!(v_i-\bar{v}_i) \nonumber\\
&\; +\partial_{\alpha}\bar{v}_i\Big(\ph-\phb\Big) \Big(\pg-\pgb\Big)
+(\theta-\bar{\theta})\left(\frac{r}{\theta}-\frac{\bar{r}}{\bar{\theta}}\right)\Big]\varphi(t) \:dx\:dt.
\end{align}
From here on we consider the case that the terms  $r=0$ and $\bar{r}=0$ and use (\ref{rel.entr.wk}), and proceed to prove recovery of smooth solutions from entropic weak solutions. The result is stated below:
\begin{theorem}
	\label{Uniqueness_thm}
	Let $\bar{U}$ be a Lipschitz bounded solution of (\ref{aug.sys.ws}) subject to the constitutive theory (\ref{con.rel.aug}) in $Q_T$ with initial data $\bar{U}^0$ and $U$ be an entropy weak solution of 
	(\ref{aug.sys.ws})-(\ref{entr.in.ws}), (\ref{con.rel.aug}) with initial data $U^0.$ Suppose that $\nabla_{\xi}^2\hat{\psi}(\xi,\theta)>0$ and $\hat{\eta}_{\theta}(\xi,\theta)>0$ and the growth
	conditions (\ref{gr.con.1}), (\ref{gr.con.2}), (\ref{gr.con3}) hold for $p\geq 4,$ and $q,r\geq 2.$ If $\bar{U}\in\Gamma_{M,\delta},$ for all $(x,t)\in Q_T$, and some $M,\delta>0,$ then there exist constants $C_1$ and $C_2$ such that
	\begin{align}
	\label{est.wk}
	\int I(U(t)|\bar{U}(t))\:dx\leq C_1e^{C_2t}\int I(U^0|\bar{U}^0)\:dx\;,
	\end{align}
	for any $t\in [0,T]$.
	Thus, in particular, whenever $U_0=\bar U_0$ a.e. then $U=\bar U$ a.e.
\end{theorem}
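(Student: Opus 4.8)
The plan is to localize the integral relative-entropy inequality \eqref{rel.entr.wk} in time and then close the estimate by Gr\"onwall. Set
\[
\mathcal{E}(t):=\int I(\xi,v,\theta|\bar{\xi},\bar{v},\bar{\theta})(x,t)\,dx .
\]
By the growth condition \eqref{gr.con.1} together with $\bar U\in\Gamma_{M,\delta}$ the function $t\mapsto\mathcal{E}(t)$ belongs to $L^\infty(0,T)$. Taking in \eqref{rel.entr.wk} a sequence $\varphi=\varphi_\varepsilon\in C_0^1\big([0,T)\big)$, $\varphi_\varepsilon\ge0$, approximating the indicator $\mathbf 1_{[0,t]}$ (so that $\varphi_\varepsilon(0)\to1$ and $\varphi_\varepsilon'$ concentrates as $-\delta_t$), letting $\varepsilon\to0$ at a Lebesgue point $t$ of $\mathcal{E}$, and using $r=\bar r=0$, one obtains
\[
\mathcal{E}(t)\le\mathcal{E}(0)+\int_0^t\!\!\int\Big[\,|\partial_t\bar\theta|\,|\hat\eta(\xi,\theta|\bar\xi,\bar\theta)|+|\partial_t\bar\xi^B|\,\big|\tfrac{\partial\hat\psi}{\partial\xi^B}(\xi,\theta|\bar\xi,\bar\theta)\big|+\big|\partial_\alpha(\pgb)\big|\,\big|(\ph-\phb)(v_i-\bar v_i)\big|+|\partial_\alpha\bar v_i|\,\big|(\pg-\pgb)(\ph-\phb)\big|\,\Big]dx\,ds,
\]
where $\mathcal{E}(0)=\int I(U^0|\bar U^0)\,dx$ is read off the initial terms of \eqref{eq1.wk}--\eqref{entr1.wka}.

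Next I would bound the right-hand side. Since $\bar U$ is Lipschitz and valued in $\Gamma_{M,\delta}$, the coefficients $\partial_t\bar\theta$, $\partial_t\bar\xi$, $\partial_\alpha(\pgb)$, $\partial_\alpha\bar v_i$ are bounded on $Q_T$ and $\bar\theta$ is bounded away from zero. The estimates of Appendix~\ref{AppA}---the same bounds \eqref{bound5}--\eqref{bound8} invoked in the proof of Theorem~\ref{thmconv}---give, under the growth hypotheses \eqref{gr.con.1}--\eqref{gr.con3} with $p\ge4$ and $q,r\ge2$, the pointwise control of each of the four integrands by $C(M,\delta)\,I(\xi,v,\theta|\bar\xi,\bar v,\bar\theta)$; in particular all integrands appearing in \eqref{rel.entr.wk} lie in $L^1(Q_T)$, which is what justifies the localization above. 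Consequently the right-hand side is $\le C_2\int_0^t\mathcal{E}(s)\,ds$ with $C_2$ depending only on $M$, $\delta$ and the Lipschitz norm of $\bar U$.

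Applying Gr\"onwall's inequality to $\mathcal{E}(t)\le\mathcal{E}(0)+C_2\int_0^t\mathcal{E}(s)\,ds$ gives $\mathcal{E}(t)\le\mathcal{E}(0)e^{C_2t}$, which is \eqref{est.wk} with $C_1=1$. Finally, if $U^0=\bar U^0$ a.e.\ then $\mathcal{E}(0)=0$, since each relative quantity entering $I$ in \eqref{I} vanishes when its two arguments agree; hence $\mathcal{E}(t)\equiv0$, and because $\nabla_\xi^2\hat\psi>0$ and $\hat\eta_\theta>0$ make $I(\,\cdot\,|\bar U)$ positive definite and vanishing only at $\bar U$ (cf.\ \eqref{defrelen} and the symmetrizability computation of Section~\ref{sec2}), we conclude $U=\bar U$ a.e.\ in $Q_T$.

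The main obstacle is twofold. First, the time localization: because $t\mapsto\mathcal{E}(t)$ is only $L^\infty$ and not continuous a priori, the estimate must be taken at Lebesgue points and the initial contribution handled through the weak-formulation traces rather than a pointwise value. Second, and more essential, is verifying that the relative entropy dominates the null-Lagrangian error terms $(\ph-\phb)(v_i-\bar v_i)$ and $(\pg-\pgb)(\ph-\phb)$: this is where the restriction $p\ge4$ is forced, since $\partial\Phi^B/\partial F$ is at most quadratic in $F$, so $|\ph-\phb|\lesssim(1+|F|+|\bar F|)|F-\bar F|$ and one must absorb the resulting cubic term $|F|^2|F-\bar F|\,|v-\bar v|$ into the relative energy, using that $\hat\psi(\xi,\theta|\bar\xi,\bar\theta)$ controls the $|F|^p$-growth of $|F-\bar F|$ while $\tfrac12|v-\bar v|^2$ controls the velocity difference---precisely the content of Appendix~\ref{AppA}. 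Everything beyond that is a routine Gr\"onwall argument.
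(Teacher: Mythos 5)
Your proposal is correct and follows essentially the same route as the paper's own proof: localize the integral relative-entropy inequality \eqref{rel.entr.wk} by testing against functions approximating the indicator of $[0,t]$ (the paper uses a two-step limit through piecewise-linear functions, you phrase it via Lebesgue points, but the content is the same), invoke the Appendix bounds \eqref{bound5}--\eqref{bound8} to dominate the source terms by $C\,I(U|\bar U)$, and conclude by Gr\"onwall. The final positive-definiteness argument to deduce $U=\bar U$ from $\mathcal{E}\equiv 0$ is also the one implicit in the paper.
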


\begin{proof} 
Let $\{\varphi_n\}$ be a sequence of monotone decreasing functions such that $\varphi_n\in C_0^1([0,T))$, $\varphi_n\geq 0,$ for all $n\in\mathbb{N},$ converging as $n \to \infty$ to the Lipschitz function
\begin{align*}
\varphi(\tau)=\begin{dcases}
1 & 0\leq\tau\leq t\\
\frac{t-\tau}{\varepsilon}+1 & t\leq\tau\leq t+\varepsilon\\
0 & \tau\geq t+\varepsilon
\end{dcases}
\end{align*}
for some $\varepsilon\geq 0$ and take the integral relation~\eqref{rel.entr.wk} against the functions $\varphi_n$:
\begin{align}
\label{rel.entr.wk-n}
\int& I(\xi,v,\theta| \bar{\xi},\bar{v},\bar{\theta})(x,0)\varphi_n(0)  \:dx  + \int_0^T \int I(\xi,v,\theta|\bar{\xi},\bar{v},\bar{\theta})\varphi^{\prime}_n(\tau)\:dx\:d\tau \nonumber \\ 
&\!\!\!\geq- \!\!\int_0^T  \int \!\left[-\partial_t\bar{\theta}\hat{\eta}(\xi,\theta|\bar{\xi},\bar{\theta})
+ \partial_t\bar{\xi}^B\frac{\partial \hat{\psi}}{\partial \xi^B}(\xi,\theta|\bar{\xi},\bar{\theta})
\right.\nonumber\\
&\;\;  +\partial_{\alpha}\left(\pgb\right)\!\left(\ph-\phb\right)\!(v_i-\bar{v}_i)\nonumber\\
&\;\;\left.+\partial_{\alpha}\bar{v}_i\left(\ph-\phb\right)\left(\pg-\pgb\right)
\right]\varphi_n(\tau) \:dx\:d\tau \, .
\end{align}
Passing to the limit as $n\to \infty$ we get
\begin{align*}
\int I(\xi,v,\theta|\bar{\xi},\bar{v},\bar{\theta})(x,0)\:&dx -\frac{1}{\varepsilon}\int_{t}^{t+\varepsilon}
\int I(\xi,v,\theta|\bar{\xi},\bar{v},\bar{\theta})\:dx\:d\tau \nonumber \\ 
&\!\!\!\geq-  \!\!\int_0^{t+\varepsilon}\!\! \int \!\left[-\partial_t\bar{\theta}\hat{\eta}(\xi,\theta|\bar{\xi},\bar{\theta})
+\partial_t\bar{\xi}^B\frac{\partial \hat{\psi}}{\partial \xi^B}(\xi,\theta|\bar{\xi},\bar{\theta})
\right.
\\
&\qquad \qquad
+\partial_{\alpha}\left(\pgb\right)\!\left(\ph-\phb\right)\!(v_i-\bar{v}_i)\nonumber\\
&\;\;\left.+\partial_{\alpha}\bar{v}_i\left(\ph-\phb\right)\left(\pg-\pgb\right)
 \right] \:dx\:d\tau
\end{align*}
and then using the estimates (\ref{bound5}), (\ref{bound6}), (\ref{bound7}) and (\ref{bound8}), as $\varepsilon \to 0^{+}$, we arrive at
\begin{align*}
\int  I(\xi,v,\theta|\bar{\xi},\bar{v},\bar{\theta})\:dx\:dt 
\leq C  \int_0^t\int I(\xi,v,\theta|\bar{\xi},\bar{v},\bar{\theta})\:dx\:d\tau
+\int I(\xi,v,\theta|\bar{\xi},\bar{v},\bar{\theta})(x,0)\:dx
\end{align*}
for $t\in (0,T);$
which in turn implies (\ref{est.wk}) by Gronwall's inequality. 
\end{proof}

\section{ Convergence from entropy weak solutions of thermoelasticity to strong solutions of adiabatic thermoelasticity}\label{sec7}

As already noted in \cite{MR1831175} the problems on asymptotic limits are quite connected to weak-strong uniqueness
results for weak (or for measure-valued) solutions. We illustrate this point in the present section by performing a convergence
from entropy weak solutions of the thermoelasticity system with Fourier heat conduction to strong solutions of
the adiabatic thermoelasticity system.

Let  $(F, v, \theta)^T$ be a weak solution of the system of thermoelasticity 
\begin{align}
\begin{split}
\label{ThE}
\partial_t F_{i \alpha} - \partial_{\alpha} v_i &=0\\
\partial_t v_i-\partial_{\alpha}\left(   \frac{\partial \hat{\psi}}{\partial \xi^B}  \Big ( \Phi(F),\theta \Big ) \frac{\partial\Phi^B}{\partial F_{i\alpha}}(F) \right)&=0\\
\partial_t \left(\frac{1}{2}|v|^2+\hat{e}(\Phi(F),\theta)\right)
-\partial_{\alpha}\left(  \frac{\partial \hat{\psi}}{\partial \xi^B}  \Big ( \Phi(F),\theta \Big ) \frac{\partial\Phi^B}{\partial F_{i\alpha}}(F)   v_i\right)&=\partial_{\alpha}(k\partial_{\alpha}\theta) + r
\\
\partial_{\alpha}F_{i\beta} - \partial_{\beta}F_{i\alpha} &=0\\
\end{split}
\end{align}
that satisfies in the sense of distributions the weak form of the entropy inequality
\begin{equation}
\label{ThE.entropy}
\partial_t\hat{\eta} \big(\Phi(F),\theta\big) -\partial_{\alpha}\left(k\frac{\nabla\theta}{\theta}\right)\geq k\frac{|\nabla\theta|^2}{\theta^2}+\frac{r}{\theta}.
\end{equation}

Lemma \ref{lemma.weak.minors}  shows that for $y$ with sufficient integrability properties ($p \ge 4$ for dimension $d=3$)
the weak form of the transport equations \eqref{geomcons} is satisfied. Hence, 
the extended function  $(\xi, v, \theta)^T$, $\xi = \Phi (F)$, is a weak solution the augmented thermoelasticity system 
\begin{align}
\begin{split}
\label{aug.ThE}
\partial_t \xi^B-\partial_{\alpha}\left(\phv \right)&=0\\ 
\partial_t v_i-\partial_{\alpha}\left(\pg\ph\right)&=0\\
\partial_t \left(\frac{1}{2}|v|^2+\hat{e}\right)-\partial_{\alpha}\left(\pg\ph v_i\right)&=\partial_{\alpha}(k\partial_{\alpha}\theta) + r
\\
\partial_{\alpha}F_{i\beta} - \partial_{\beta}F_{i\alpha} &=0
\end{split}
\end{align}
and satisfies the inequality
\begin{equation}
\label{aug.ThE.ent}
\partial_t\hat{\eta}-\partial_{\alpha}\left(k\frac{\nabla\theta}{\theta}\right) \geq k\frac{|\nabla\theta|^2}{\theta^2}+\frac{r}{\theta}.
\end{equation}
in the sense of distributions.

Henceforth, we restrict to weak entropic solutions of \eqref{aug.ThE}, \eqref{aug.ThE.ent}; these will be functions
$U=(\xi,v,\theta)^T,$ with regularity
$$
\xi:=(F,\zeta,w)\in L^{\infty}(L^p)\times L^{\infty}(L^q)\times L^{\infty}(L^r), \quad 
v\in L^{\infty}(L^2) , \quad \theta\in L^{\infty}(L^{\ell}),
$$ 
$$
\begin{aligned}
       &\Sigma_{i \alpha} :=  \sit \in L^1(Q_T)  \, , \quad \Sigma_{i \alpha}  v_i \in L^1 (Q_T) \, , \quad k \nabla \theta \in L^1 (Q_T) 
\\
       & k \frac{\nabla \theta}{\theta} \in L^1 (Q_T) \, , \quad k \frac{ |\nabla \theta|^2}{\theta^2}  \in L^1 (Q_T) \, , \quad  \frac{r}{\theta} \in L^1 (Q_T)
\end{aligned}
$$
that satisfy  (\ref{aug.ThE}) in the sense of distributions and the inequality \eqref{aug.ThE.ent} again in the sense of distributions.

 It is natural to consider the class of entropic weak solutions,
since, in general, smooth solutions of thermoelasticity can break down in finite time due to the formation of shocks. 
Stability analysis similar to Theorem \ref{Ad.Th-el} can be performed between an entropy weak solution $U = (\xi, v, \theta)^T$
of (\ref{aug.ThE}), \eqref{aug.ThE.ent} and a strong conservative solution $\bar U = ( \bar \xi , \bar v, \bar \theta)^T$
of the augmented adiabatic thermoelasticity system 
\eqref{aug.sys.ws}, \eqref{entr.eq.ws}. The main point is to derive the relative entropy identity \eqref{rel.en.id} 
under the regularity framework of  weak entropic solutions of thermoelasticity. 
The reader should note that the formal calculations of Section \ref{sec3} leading to \eqref{rel.en.id} are performed for smooth solutions.
However, a close survey of the calculations indicates that, when dealing with entropy weak solutions,  
it would suffice to derive the identity \eqref{iden.interm}  in the sense of distributions for entropy weak solutions.
The reason is that the remainder of the derivation of \eqref{rel.en.id} 
requires only algebraic manipulations involving the strong solution $\bar U$ that can be directly performed. 
The derivation of the weak form of \eqref{iden.interm} is accomplished by an argument similar to the proof of 
\eqref{rel.entr.wk} in Section \ref{sec6}. We conclude:

\begin{theorem}
	\label{wk.ThE}
	Let $U_{k}$ be an entropy weak solution of the system (\ref{aug.ThE}),(\ref{aug.ThE.ent}), with $r=0$, subject to the constitutive relations (\ref{con.rel.aug}) with data $U_k^0$ and
	defined on $Q_{T^*}$. Let $\bar{U}=(\bar\xi,\bar v,\bar\theta)^T$ be a smooth solution of the system of adiabatic thermoelasticity 
	(\ref{aug.sys.2}),(\ref{aug.sys.2.e}),(\ref{con.rel.aug}) with $\bar{r}=0$ and data $\bar{U}^0$ defined on $\bar{Q}_T$, with $0<T<T^*$. 
	Suppose that $\nabla_{\xi}^2\hat{\psi}(\xi,\theta)>0$, $\hat{\eta}_{\theta}(\xi,\theta)>0$ and the growth conditions 
	(\ref{gr.con.1}), (\ref{gr.con.2}), (\ref{gr.con3}) hold for $p\geq 4,$ and $q,r\geq 2.$  
	If $\bar{U}\in\Gamma_{M,\delta},$ for some $M,\delta>0,$ then there exists a constant $C=C(T)$ such that
		\begin{align*} 
		\int I(U_{k}(t)|\bar{U}(t))\:dx\leq C\left(\int I(U_k^0|\bar{U}^0)\:dx+\int_{0}^{T}\int k\frac{|\nabla\bar{\theta}(s)|^2}{\bar{\theta}(s)}\;dxdt\right)\;,
		\end{align*}
	for $t\in[0,T]$. If
	\begin{align*}
    |k(\xi,\theta)|\leq C k_0|\hat{e}(\xi,\theta)|,
	\end{align*}
	for some positive constant $k_0,$ and the data satisfy
	$\lim_{k_0\to 0^+}\int I(U_{k}^0|\bar{U}^0)\:dx=0$,
	then
	\begin{align*}
	\sup_{t\in[0,T)}\int I(U_{k}(t)|\bar{U}(t))\:dx\to 0, \quad\text{as $k_0\to 0^+.$}
	\end{align*}
\end{theorem}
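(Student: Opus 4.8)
The plan is to run the weak--strong stability argument of Section~\ref{sec6}, adapted to the present situation in which the weak solution $U_k$ carries Fourier heat conduction while the comparison solution $\bar U$ is an adiabatic, conservative, Lipschitz solution. The first step is to establish the weak form of the relevant relative entropy inequality. For smooth solutions this is just \eqref{rel.en.id} with $r=\bar r=0$, $Z=\bar Z=0$, $\bar Q=0$ and $Q=k\nabla\theta$, which coincides with \eqref{rel.en.id.N.F} for $\mu=0$; integrated over $\mathbb{T}^d$ it reads
\[
\frac{d}{dt}\int I(U_k\,|\,\bar U)\,dx+\int\bar\theta\,k\,\frac{|\nabla\theta|^2}{\theta^2}\,dx
=\int R\,dx+\int k\,\frac{\nabla\theta\cdot\nabla\bar\theta}{\theta}\,dx,
\]
where $R$ collects the ``mechanical'' remainder terms $-\partial_t\bar\theta\,\hat\eta(\xi,\theta|\bar\xi,\bar\theta)+\partial_t\bar\xi^B\frac{\partial\hat\psi}{\partial\xi^B}(\xi,\theta|\bar\xi,\bar\theta)+\partial_{\alpha}(\pgb)(\ph-\phb)(v_i-\bar v_i)+\partial_{\alpha}\bar v_i(\pg-\pgb)(\ph-\phb)$ already present in \eqref{rel.en.id.N.F}. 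Following the remark preceding the theorem, it suffices to derive the weak version of \eqref{iden.interm}, since the further passage to \eqref{rel.en.id} uses only pointwise algebra in $\bar U$, which is legitimate because $\bar U$ is Lipschitz. Concretely I would test the weak conservation laws \eqref{aug.ThE} and the entropy inequality \eqref{aug.ThE.ent} against the components of $-\bar\theta\,G(\bar U)\varphi(t)=(-\pgb,-\bar v_i,1)^T\varphi(t)$, $\varphi=\varphi(t)\ge0$, exactly as in the derivation of \eqref{rel.entr.wk}. Since $\varphi$ does not depend on $x$, the heat term of the energy equation contributes nothing there and enters only through the entropy inequality, tested against $\bar\theta\varphi\ge0$, where it produces the dissipation $\bar\theta\,k\,|\nabla\theta|^2/\theta^2$ with the favorable sign together with the cross term $k\,\nabla\theta\cdot\nabla\bar\theta/\theta$. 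The integrability class imposed on $U_k$ (the $L^1(Q_T)$ conditions listed before the theorem, the growth bounds \eqref{gr.con.1}--\eqref{gr.con3}, and $p\ge4$, $q,r\ge2$) is precisely what is needed for each term in this weak identity to make sense.

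Next I would take $\varphi=\varphi_n$ a monotone sequence approximating the indicator of $[0,t]$, as in the proof of Theorem~\ref{Uniqueness_thm}, and pass to the limits $n\to\infty$ and then $\varepsilon\to0^+$ to obtain, for a.e.\ $t\in[0,T]$,
\[
\int I(U_k(t)|\bar U(t))\,dx+\int_0^t\!\!\int\bar\theta\,k\,\frac{|\nabla\theta|^2}{\theta^2}\,dx\,d\tau
\le\int I(U_k^0|\bar U^0)\,dx+\int_0^t\!\!\int\Big(R+k\,\frac{\nabla\theta\cdot\nabla\bar\theta}{\theta}\Big)\,dx\,d\tau.
\]
Then I would split the cross term by Young's inequality, $k\,\nabla\theta\cdot\nabla\bar\theta/\theta\le\frac12\bar\theta\,k\,|\nabla\theta|^2/\theta^2+\frac12 k\,|\nabla\bar\theta|^2/\bar\theta$, absorb half of the dissipation on the left, discard the remaining nonnegative dissipation, and estimate $\int R\,dx\le C\int I(U_k|\bar U)\,dx$ through the bounds \eqref{bound5}--\eqref{bound8} of Appendix~\ref{AppA}, using that $\bar U\in\Gamma_{M,\delta}$ is Lipschitz and bounded away from $\bar\theta=0$. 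This gives $\frac{d}{dt}\int I\,dx\le C\int I\,dx+\frac12\int k\,|\nabla\bar\theta|^2/\bar\theta\,dx$, and Gronwall's inequality yields the asserted bound $\int I(U_k(t)|\bar U(t))\,dx\le C(T)\big(\int I(U_k^0|\bar U^0)\,dx+\int_0^T\!\!\int k\,|\nabla\bar\theta|^2/\bar\theta\,dx\,dt\big)$.

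For the convergence claim I would note that testing the weak energy balance (\ref{aug.ThE})$_3$ against an $x$--independent function yields the conservation $\int(\tfrac12|v|^2+\hat e)(t)\,dx=\int(\tfrac12|v^0|^2+\hat e^0)\,dx$, so that, by the lower bound in \eqref{gr.con.1}, $\|\hat e(\xi,\theta)(t)\|_{L^1(\mathbb{T}^d)}\le C$ uniformly in $t$. Hence $|k(\xi,\theta)|\le Ck_0|\hat e(\xi,\theta)|$ gives $\|k(\xi,\theta)(t)\|_{L^1(\mathbb{T}^d)}\le Ck_0$, and since $\bar\theta\ge\delta$ and $\nabla\bar\theta$ is bounded on $\Gamma_{M,\delta}$ we get $\int_0^T\!\!\int k\,|\nabla\bar\theta|^2/\bar\theta\,dx\,dt\le C'k_0\to0$ as $k_0\to0^+$. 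Together with $\int I(U_k^0|\bar U^0)\,dx\to0$, the Gronwall bound then forces $\sup_{t\in[0,T)}\int I(U_k(t)|\bar U(t))\,dx\to0$.

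The step I expect to be the main obstacle is the rigorous derivation of the weak form of \eqref{iden.interm}: one must check that all nonlinear contributions --- the stress $\sit$, the flux $\sit v_i$, and the heat terms $k\nabla\theta/\theta$, $\bar\theta\,k\,|\nabla\theta|^2/\theta^2$ and $k\,\nabla\theta\cdot\nabla\bar\theta/\theta$ --- are genuinely integrable under the stated regularity, that testing against $-\bar\theta\,G(\bar U)\varphi$ is admissible, and that the $\varphi_n\to\mathbf 1_{[0,t]}$, $\varepsilon\to0^+$ limiting procedure goes through at Lebesgue points of $\tau\mapsto\int I\,dx$. Once this weak identity is in place, the rest is the same Gronwall machinery as in Theorems~\ref{Ad.Th-el} and~\ref{Uniqueness_thm}.
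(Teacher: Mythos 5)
Your proposal is correct and follows exactly the route the paper sketches: derive the weak form of the relative entropy inequality by testing \eqref{aug.ThE}, \eqref{aug.ThE.ent} against $-\bar\theta\,G(\bar U)\varphi_n(t)$ and passing to the $\mathbf 1_{[0,t]}$ limit as in Section~\ref{sec6}, then run the Young--Gronwall argument of Theorem~\ref{Ad.Th-el} with $\mu=0$, and finally obtain the uniform $L^1$ control on $k$ from energy conservation together with \eqref{gr.con.1} and the hypothesis $|k|\le Ck_0|\hat e|$. The paper's proof is a one-paragraph pointer to precisely these two ingredients (Theorem~\ref{Ad.Th-el} and Section~\ref{sec6}), and your proposal fills them in faithfully, including the correct identification of the only nontrivial verification, namely that the nonlinear stress, flux, and heat terms are integrable under the stated regularity class.
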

\begin{proof} The proof follows Theorem \ref{Ad.Th-el}. The only difference is that the derivation of the relative entropy identity 
and (\ref{Gronwal.adiab.}) need to be performed in the class of entropy weak solutions of (\ref{aug.ThE}). This is done  following the ideas used in Section \ref{sec6}.
\end{proof}

\appendix
\section{Growth conditions and auxiliary estimates} \label{AppA}

In this appendix, we prove under the growth conditions~\eqref{gr.con.1}--\eqref{gr.con.2} on the extended variables $\hat{e}$ and $\hat{\psi}$ useful bounds on the relative entropy. These bounds are employed to prove the theorems in Sections~\ref{sec4}--\ref{sec7}.

For convenience, let us recall the growth conditions
\begin{equation}
\label{Agr.con.1}
c(|\xi|_{p,q,r}+\theta^{\ell})-c\leq\hat{e}(\xi,\theta)\leq c(|\xi|_{p,q,r}+\theta^{\ell})+c
\end{equation}
\begin{equation}
\label{Agr.con.2}
\lim_{|\xi|_{p,q,r}+\theta^{\ell}\to\infty}\frac{|\partial_F\hat{\psi}|+|\partial_{\zeta}\hat{\psi}|^{\frac{p}{p-1}}+|\partial_w\hat{\psi}|^{\frac{p}{p-2}}}{|\xi|_{p,q,r}+\theta^{\ell}}=0
\end{equation}
and
\begin{equation}
\label{Agr.con3}
\lim_{|\xi|_{p,q,r}+\theta^{\ell}\to\infty}\frac{|\partial_{\theta}\hat{\psi}|}{|\xi|_{p,q,r}+\theta^{\ell}}=0
\end{equation}
for some constant $c>0$ and $p \ge 4,$ $q,r,\ell>1.$

First we prove the following lemma.
\begin{lemma}
	\label{lemma1}
	Assume that $(\bar{\xi},\bar{v},\bar{\theta})\in\Gamma_{M,\delta}$ 
	defined in \eqref{defGamma} and let $\hat{\psi}=\hat{e}-\theta\hat{\eta}\in C^3.$ Then
	\begin{enumerate}
		\item
		There exist $R=R(M,\delta)$ and a constant $C>0$ such that
			\begin{align}
			\label{bound1}
			\begin{split}
			\Big|\big(\ph-&\phb\big)\!\left(\pg-\pgb\right)\Big|\le\\
			&\leq
			\begin{dcases}
			C(|\xi|_{p,q,r}+\theta^{\ell}),  &|\xi|_{p,q,r}+\theta^{\ell}>R\\
			C(|F-\bar{F}|^2+|\zeta-\bar{\zeta}|^2+|w-\bar{w}|^2 +|\theta-\bar{\theta}|^2 ),  &|\xi|_{p,q,r}+\theta^{\ell}\leq R
			\end{dcases}
			\end{split}
			\end{align}
		for all $(\bar{\xi},\bar{v},\bar{\theta})\in\Gamma_{M,\delta}$.
		\item
		There exist $R=R(M,\delta)$ and a constant $C>0$ such that
		\begin{small}
			\begin{align}
			\label{bound2}
			\begin{split}
			&\left|\frac{\partial \hat{\psi}}{\partial \xi}(\xi,\theta|\bar{\xi},\bar{\theta})\right|
			\leq\begin{dcases}
			C(|\xi|_{p,q,r}+\theta^{\ell}), \qquad\qquad\qquad\qquad\qquad\qquad\quad |\xi|_{p,q,r}+\theta^{\ell}>R\\
			C(|F-\bar{F}|^2+|\zeta-\bar{\zeta}|^2+|w-\bar{w}|^2 +|\theta-\bar{\theta}|^2),\: |\xi|_{p,q,r}+\theta^{\ell}\leq R
			\end{dcases}
			\end{split}
			\end{align}
		\end{small}
		for all $(\bar{\xi},\bar{v},\bar{\theta})\in\Gamma_{M,\delta}$.
		\item
		There exist $R=R(M,\delta)$ and a constant $C>0$ such that
		\begin{small}
			\begin{align}
			\label{bound3}
			\begin{split}
			&\left|\left(\ph-\phb\right)(v_i-\bar{v}_i)\right|\\
			&\qquad \:\leq\begin{dcases}
			C\left(|\xi|_{p,q,r}+\theta^{\ell}+\frac{|v-\bar{v}|^2}{2}\right), &|\xi|_{p,q,r}+\theta^{\ell}+|v|^2>R\\
			C(|F-\bar{F}|^2+|\zeta-\bar{\zeta}|^2+|w-\bar{w}|^2 +|\theta-\bar{\theta}|^2 + |v-\bar{v}|^2    ), & |\xi|_{p,q,r}+\theta^{\ell}+|v|^2\leq R
			\end{dcases}
			\end{split}
			\end{align}
		\end{small}
		for all $(\bar{\xi},\bar{v},\bar{\theta})\in\Gamma_{M,\delta}$.
	\end{enumerate}
\end{lemma}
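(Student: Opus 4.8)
The plan is to establish \eqref{bound1}, \eqref{bound2} and \eqref{bound3} by a single two-regime scheme. Writing $N:=|\xi|_{p,q,r}+\theta^{\ell}$, I would split the state space into a ``far'' regime $\{N>R\}$ (for \eqref{bound3}, $\{N+|v|^{2}>R\}$) and the complementary ``near'' regime, fixing the threshold $R=R(M,\delta)$ only at the end so that the far-regime bounds close. The structural fact that drives the argument is that, since $\Phi(F)=(F,\cof F,\det F)$, the map $\tfrac{\partial\Phi^{B}}{\partial F_{i\alpha}}$ is \emph{constant} on the block $B=1,\dots,9$, is \emph{linear} in $F$ on the $\cof F$-block, and equals $(\cof F)_{i\alpha}$ (hence quadratic in $F$) on the $\det F$-block. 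Thus the increment $\ph-\phb$ vanishes on the first block and obeys $|\ph-\phb|\le C(|F|+|\bar F|)$, resp. $\le C(|F|^{2}+|\bar F|^{2})$, on the other two; correspondingly the $B$-sums in \eqref{bound1} and \eqref{bound3} split into a $\cof F$-term paired with $\partial_{\zeta}\hat\psi$ and a $\det F$-term paired with $\partial_{w}\hat\psi$, with no cross terms.

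On the near regime, $(\xi,\theta)$ --- and, for \eqref{bound3}, also $v$ --- ranges over a fixed compact set, while $(\bar\xi,\bar v,\bar\theta)\in\Gamma_{M,\delta}$ is compact by \eqref{defGamma}. Since $\hat\psi\in C^{3}$, $\partial_{\xi}\hat\psi$ is of class $C^{2}$, and $F\mapsto\tfrac{\partial\Phi^{B}}{\partial F_{i\alpha}}(F)$ is polynomial; both are Lipschitz on the relevant compact set. For \eqref{bound1} and \eqref{bound3} this yields $|\ph-\phb|\le C|F-\bar F|$, after which I would estimate the product by $C|F-\bar F|\,(|\xi-\bar\xi|+|\theta-\bar\theta|)$, resp. $C|F-\bar F|\,|v-\bar v|$, and conclude with Young's inequality and $|F-\bar F|\le|\xi-\bar\xi|$. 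For \eqref{bound2} I would observe that $\tfrac{\partial\hat\psi}{\partial\xi}(\xi,\theta|\bar\xi,\bar\theta)$ of \eqref{partial.g.rel} is exactly the first-order Taylor remainder of $(\xi,\theta)\mapsto\partial_{\xi}\hat\psi$ expanded at $(\bar\xi,\bar\theta)$, so Taylor's formula together with the uniform bound for the second derivatives of $\partial_{\xi}\hat\psi$ on the relevant compact set gives the quadratic estimate at once.

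On the far regime the growth hypotheses do the work. From \eqref{Agr.con.2} one reads off $|\partial_{F}\hat\psi|=o(N)$, $|\partial_{\zeta}\hat\psi|=o(N^{(p-1)/p})$, $|\partial_{w}\hat\psi|=o(N^{(p-2)/p})$, and $|F|\le N^{1/p}$. For \eqref{bound1}: the $\cof F$-block increment ($\le CN^{1/p}+C$) times the $\partial_{\zeta}\hat\psi$-difference is $o(N^{1/p+(p-1)/p})=o(N)$, and the $\det F$-block increment ($\le CN^{2/p}+C$) times the $\partial_{w}\hat\psi$-difference is $o(N^{2/p+(p-2)/p})=o(N)$; adding the bounded contributions involving $(\bar\xi,\bar\theta)$ and taking $R$ large gives $\le CN$. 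For \eqref{bound2}: $|\partial_{\xi}\hat\psi(\xi,\theta)|=o(N)$ by \eqref{Agr.con.2}, and the subtracted terms are affine in $(\xi,\theta)$ with coefficients evaluated at the bounded state, hence $\le C(1+N^{1/p}+N^{1/q}+N^{1/r}+N^{1/\ell})\le CN$ for $R$ large, since all those exponents are $<1$. For \eqref{bound3}: the increment ($\le CN^{2/p}+C$) times $|v-\bar v|\le|v|+C$ leaves the borderline term $N^{2/p}|v|$, which I would control by Young's inequality, $N^{2/p}|v|\le\tfrac12N^{4/p}+\tfrac12|v|^{2}$; since $p\ge4$ one has $N^{4/p}\le N+1$, and $|v|^{2}\le 2|v-\bar v|^{2}+C$, so --- noting that $N+|v-\bar v|^{2}$ is bounded below on the far set for $R$ large --- everything is absorbed into $C(N+|v-\bar v|^{2})$.

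The main obstacle I anticipate is the far-regime estimate for \eqref{bound3}: this is where the hypothesis $p\ge4$ is genuinely needed, because the $\det F$-block increment grows like $|F|^{2}\sim N^{2/p}$ and only for $p\ge4$ does Young's inequality split $N^{2/p}|v|$ into a part dominated by $N$ and a part dominated by $|v-\bar v|^{2}$. What remains --- fixing $R=R(M,\delta)$ large enough that every $o(N)$ term and every additive constant is absorbed into $CN$ on each far regime --- is routine bookkeeping.
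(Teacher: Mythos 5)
Your argument is correct and follows essentially the same path as the paper's: split into a far regime controlled by the growth hypotheses \eqref{Agr.con.2}--\eqref{Agr.con3} via Young's inequality (with $p\ge4$ doing the work for \eqref{bound3}), and a near regime controlled by $C^3$-regularity and compactness of $B_R$ and $\Gamma_{M,\delta}$. Your explicit block decomposition of $\partial\Phi^B/\partial F$ (constant / linear / quadratic in $F$) is a slightly more transparent bookkeeping of the paper's bound $C_1\bigl(1+|\partial_F\hat\psi|+(1+|\partial_\zeta\hat\psi|)(1+|F|)+(1+|\partial_w\hat\psi|)(1+|F|^2)\bigr)$ and makes clear why the exponents $\tfrac{p}{p-1}$ and $\tfrac{p}{p-2}$ appear in \eqref{Agr.con.2}, but the substance is identical.
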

\begin{proof}
Choose $R$ sufficiently large so that $\Gamma_{M,\delta}\subset B_R:=\{(\xi,v,\theta)::\; |\xi|_{p,q,r}+\theta^{\ell}+|v|^2\leq R\}.$
Moreover, we define the compact set
\begin{equation*}
\hat{\Gamma}_{M,\delta}:=\left\{(\bar{\xi},\bar{\theta}):: |\bar{F}|\leq M, |\bar{\zeta}|\leq M, |\bar{w}|\leq M,\;\; 0<\delta\leq\bar{\theta}\leq M\right\},
\end{equation*}
for which, then, also holds $\hat{\Gamma}_{M,\delta}\subset \hat{B}_R:=\{(\xi,\theta)::\; |\xi|_{p,q,r}+\theta^{\ell}\leq R\}.$
We divide the proof into 3 steps.

\textbf{Step $1.$} To prove the first assertion, we first consider the case $|\xi|_{p,q,r}+\theta^{\ell}>R.$ Using Young's inequality and assumption (\ref{Agr.con.2}), we have
for $(\bar{\xi},\bar{\theta})\in\hat{\Gamma}_{M,\delta}$
\begin{align*}
&\left|\left(\ph-\phb\right)\!\left(\pg-\pgb\right)\right|\\
&\qquad\quad\leq C_1(1+|\partial_F\hat{\psi}|+(1+|\partial_{\zeta}\hat{\psi}|)\cdot(1+|F|)+(1+|\partial_w\hat{\psi}|)\cdot(1+|F|^2))\\
&\qquad\quad\leq C_2(|\xi|_{p,q,r}+\theta^{\ell})+C_3\left(|\partial_F\hat{\psi}|+|\partial_{\zeta}\hat{\psi}|^{\frac{p}{p-1}}+|\partial_w\hat{\psi}|^{\frac{p}{p-2}}+1\right)\\
&\qquad\quad\leq C_4(|\xi|_{p,q,r}+\theta^{\ell}+1)\\
&\qquad\quad\leq C(|\xi|_{p,q,r}+\theta^{\ell}).
\end{align*}
On the domain  $|\xi|_{p,q,r}+\theta^{\ell}\leq R$, 
\begin{align*}
\left| \left(\ph-\phb\right)\! \right.& \left.\left(\pg-\pgb\right)\right|\\
&
\leq C(|F-\bar{F}|^2+|\zeta-\bar{\zeta}|^2+|w-\bar{w}|^2 + |\theta-\bar{\theta}|^2 ),
\end{align*}
where $C=\displaystyle\max_{\hat{B}_R}\left\{\nabla_{(\xi,\theta)}\:\;\left(\pg\right),\nabla_F \left(\ph\right)\right\}$.

\textbf{Step $2.$}  Using (\ref{partial.g.rel}), Young's inequality and (\ref{Agr.con.2}), we obtain
\begin{align*}
\left|\frac{\partial \hat{\psi}}{\partial \xi}(\xi,\theta|\bar{\xi},\bar{\theta})\right|
&\leq |\partial_F\hat{\psi}|+|\partial_{\zeta}\hat{\psi}|+|\partial_w\hat{\psi}|+C_1(|F|+|\zeta|+|w|+\theta)+C_2\\
&\leq C_3\left(|\partial_F\hat{\psi}|+|\partial_{\zeta}\hat{\psi}|^{\frac{p}{p-1}}+|\partial_w\hat{\psi}|^{\frac{p}{p-2}}\right)
+C_4(|\xi|_{p,q,r}+\theta^{\ell}) +C_5 \\
&\leq C(|\xi|_{p,q,r}+\theta^{\ell})
\end{align*}
for $|\xi|_{p,q,r}+\theta^{\ell}>R$ and $(\bar{\xi},\bar{\theta})\in\hat{\Gamma}_{M,\delta}.$
In the complementary region $|\xi|_{p,q,r}+\theta^{\ell}\leq R$ 
\begin{align*}
\left|\frac{\partial \hat{\psi}}{\partial \xi}(\xi,\theta|\bar{\xi},\bar{\theta})\right|
\leq C(|F-\bar{F}|^2+|\zeta-\bar{\zeta}|^2+|w-\bar{w}|^2+|\theta-\bar{\theta}|^2),
\end{align*}
for $C=\displaystyle\max_{\hat{B}_R}\left|\nabla_{(\xi,\theta)}^3\hat{\psi}(\xi,\theta)\right|.$

\textbf{Step $3.$} To prove (\ref{bound3}), we proceed in a similar fashion. First, for $|\xi|_{p,q,r}+\theta^{\ell} +|v|^2>R$ and 
$(\bar{\xi},\bar{v},\bar{\theta})\in\Gamma_{M,\delta}$, we have
\begin{align*}
\left|\left(\ph-\phb\right)(v_i-\bar{v}_i)\right|&\leq\
\tfrac{1}{2} \left| \frac{\partial\Phi^B}{\partial F}(F)-\frac{\partial\Phi^B}{\partial F}(\bar{F}) \right|^2+ \tfrac{1}{2} |v-\bar{v}|^2\\
&\leq\frac{C_1}{2}(1+|F|+|F|^2)^2+\frac{|v-\bar{v}|^2}{2}\\
&\leq C_3+|F|^p+\frac{|v-\bar{v}|^2}{2}\\
&\leq C\left(|\xi|_{p,q,r}+\theta^{\ell}+\frac{|v-\bar{v}|^2}{2}\right)\;.
\end{align*}
Then for $|\xi|_{p,q,r}+\theta^{\ell}+|v|^2\leq R$ and for all $(\bar{\xi},\bar{v},\bar{\theta})\in\Gamma_{M,\delta}$, we also get
\begin{align*}
\left|\ph-\phb\right|^2 &\leq C_1(|F-\bar{F}|)^2\\
&\leq C(|F-\bar{F}|^2+|\zeta-\bar{\zeta}|^2+|w-\bar{w}|^2 +|\theta-\bar{\theta}|^2),
\end{align*}
where $C_1=\displaystyle\max_{B_R}\nabla_F\left(\ph\right)$. \end{proof}

Using the results of Lemma \ref{lemma1}, we can now bound $I(\xi,v,\theta|\bar{\xi},\bar{v},\bar{\theta})$ as the following lemma indicates. 
\begin{lemma}
	\label{lemma2}
	Under the assumptions of Lemma \ref{lemma1},
	\begin{enumerate}
		\item
		There exist $R=R(M,\delta)$ and constants $K_1=K_1(M,\delta,c)>0,\:K_2=K_2(M,\delta,c)>0$ such that
		\begin{small}
			\begin{align}
			\label{bound4}
			\begin{split}
			&I(\xi,v,\theta|\bar{\xi},\bar{v},\bar{\theta})\geq\\
			&\:\geq
			\begin{dcases}
			\frac{K_1}{2}(|\xi|_{p,q,r}+\theta^{\ell}+|v|^2), \qquad\qquad\qquad\quad\quad\quad\quad\quad\quad\quad |\xi|_{p,q,r}\!+\theta^{\ell}\!+|v|^2>R\\
			K_2(|F-\bar{F}|^2\!+\!|\zeta-\bar{\zeta}|^2\!+\!|w-\bar{w}|^2\!+\!|\theta-\bar{\theta}|^2\!+\!|v-\bar{v}|^2),\: |\xi|_{p,q,r}\!+\theta^{\ell}\!+|v|^2\leq R
			\end{dcases}
			\end{split}
			\end{align}
		\end{small}
		for all $(\bar{\xi},\bar{v},\bar{\theta})\in\Gamma_{M,\delta}$.
		\item
		There exists a constant $C>0$ such that
		\begin{align}
		\label{bound5}
		|\hat{\eta}(\xi,\theta|\bar{\xi},\bar{\theta})|\leq C I(\xi,v,\theta|\bar{\xi},\bar{v},\bar{\theta})
		\end{align}
		for all $(\bar{\xi},\bar{v},\bar{\theta})\in\Gamma_{M,\delta}$.
		\item
		There exist constants $K_1,\:K_2$ such that
		\begin{small}
			\begin{align}
			\label{bound9}
			\begin{split}
			&I(\xi,v,\theta|\bar{\xi},\bar{v},\bar{\theta})\geq\\
			&\:\geq
			\begin{dcases}
			\frac{K_1}{4}(|\xi-\bar{\xi}|_{p,q,r}+|\theta-\bar{\theta}|^{\ell}+|v-\bar{v}|^2), \qquad\quad\quad\quad\quad\quad\; |\xi|_{p,q,r}\!+\theta^{\ell}\!+|v|^2>R\\
			K_2(|F-\bar{F}|^2\!+\!|\zeta-\bar{\zeta}|^2\!+\!|w-\bar{w}|^2\!+\!|\theta-\bar{\theta}|^2\!+\!|v-\bar{v}|^2),\: |\xi|_{p,q,r}\!+\theta^{\ell}\!+|v|^2\leq R
			\end{dcases}
			\end{split}
			\end{align}
		\end{small}
		for all $(\bar{\xi},\bar{v},\bar{\theta})\in\Gamma_{M,\delta}$.
		\item
		There exist constants $C_1,C_2,C_3>0$ such that
		\begin{equation}
		\label{bound6}
		\left|\left(\ph-\phb\right)\!\!\left(\pg-\pgb\right)\right|\leq C_1 I(\xi,v,\theta|\bar{\xi},\bar{v},\bar{\theta})
		\end{equation}
		\begin{equation}
		\label{bound7}
		\left|\frac{\partial \hat{\psi}}{\partial \xi}(\xi,\theta|\bar{\xi},\bar{\theta})\right|\leq C_2 I(\xi,v,\theta|\bar{\xi},\bar{v},\bar{\theta})
		\end{equation}
		and \begin{equation}
		\label{bound8}
		\left|\left(\ph-\phb\right)(v_i-\bar{v}_i)\right|\leq C_3 I(\xi,v,\theta|\bar{\xi},\bar{v},\bar{\theta})
		\end{equation}
		for all $(\bar{\xi},\bar{v},\bar{\theta})\in\Gamma_{M,\delta}$.
	\end{enumerate}
\end{lemma}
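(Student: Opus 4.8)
The plan is to reduce every estimate to the lower bound \eqref{bound4}--\eqref{bound9} on $I(\xi,v,\theta|\bar\xi,\bar v,\bar\theta)$ and then combine with the upper bounds from Lemma \ref{lemma1}. First I would establish \eqref{bound4}. Recall $I = \hat\psi(\xi,\theta|\bar\xi,\bar\theta) + \tfrac12|v-\bar v|^2 + (\hat\eta-\bar{\hat\eta})(\theta-\bar\theta)$. On the \emph{bounded} region $|\xi|_{p,q,r}+\theta^\ell+|v|^2\le R$, the set of admissible $(\xi,v,\theta)$ together with $(\bar\xi,\bar v,\bar\theta)\in\Gamma_{M,\delta}$ is compact, $\theta$ is bounded away from $0$, and the Hessian $\nabla^2_{(\xi,\theta)}\hat\psi$ is positive definite (this uses \eqref{cond.redgibbs2}, i.e. $\hat\psi_{\xi\xi}>0$ and $\hat\eta_\theta>0$, equivalently $\hat e_\theta=\theta\hat\eta_\theta>0$), so a Taylor expansion with integral remainder gives a uniform constant $K_2>0$ with $I\ge K_2(|F-\bar F|^2+|\zeta-\bar\zeta|^2+|w-\bar w|^2+|\theta-\bar\theta|^2+|v-\bar v|^2)$. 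On the \emph{unbounded} region $|\xi|_{p,q,r}+\theta^\ell+|v|^2>R$, I would use the growth condition \eqref{Agr.con.1} to bound $\hat e(\xi,\theta)$ from below by $c(|\xi|_{p,q,r}+\theta^\ell)-c$, use \eqref{Agr.con.2}--\eqref{Agr.con3} to absorb the linear-in-$\xi$ and $\partial_\theta\hat\psi$ terms (they are $o(|\xi|_{p,q,r}+\theta^\ell)$), and absorb the $(\hat\eta-\bar{\hat\eta})(\theta-\bar\theta)$ cross term by Young's inequality; choosing $R=R(M,\delta)$ large enough makes the negative contributions a fraction of the leading term, yielding $I\ge \tfrac{K_1}{2}(|\xi|_{p,q,r}+\theta^\ell+|v|^2)$.

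With \eqref{bound4} in hand, \eqref{bound5} follows by splitting on the same two regions: in the bounded region, $|\hat\eta(\xi,\theta|\bar\xi,\bar\theta)|\le C(|F-\bar F|^2+|\zeta-\bar\zeta|^2+|w-\bar w|^2+|\theta-\bar\theta|^2)$ by a second-order Taylor estimate on the compact set (constant $C=\max_{\hat B_R}|\nabla^2_{(\xi,\theta)}\hat\eta|$), while in the unbounded region $|\hat\eta(\xi,\theta|\bar\xi,\bar\theta)|\le C(|\xi|_{p,q,r}+\theta^\ell)$ using \eqref{Agr.con3} and the definition \eqref{partial.eta.rel} (note $\hat\eta=-\hat\psi_\theta$, so $\partial_\xi\hat\eta = -\partial_\theta(\partial_\xi\hat\psi)$ and the growth of these terms is controlled by \eqref{Agr.con.2}--\eqref{Agr.con3}); in both cases the right-hand side is $\le C\cdot I$ by \eqref{bound4}. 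The bounds \eqref{bound6}, \eqref{bound7}, \eqref{bound8} are then immediate: Lemma \ref{lemma1} gives exactly the same dichotomy (e.g. \eqref{bound1} bounds $|(\partial_F\Phi(F)-\partial_F\Phi(\bar F))(\partial_\xi\hat\psi-\partial_\xi\bar{\hat\psi})|$ by $C(|\xi|_{p,q,r}+\theta^\ell)$ for large argument and by $C(|F-\bar F|^2+\cdots+|\theta-\bar\theta|^2)$ for small), and dividing through by $I$ using \eqref{bound4} converts these into the stated multiples of $I$; likewise \eqref{bound2}$\Rightarrow$\eqref{bound7} and \eqref{bound3}$\Rightarrow$\eqref{bound8} (for the latter note \eqref{bound3} already contains the $|v-\bar v|^2$ contribution, which is itself $\le 2I$).

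Finally, \eqref{bound9} is a refinement of \eqref{bound4} in the unbounded region: I would observe that, since $(\bar\xi,\bar v,\bar\theta)\in\Gamma_{M,\delta}$ is bounded by $M$, on the set $|\xi|_{p,q,r}+\theta^\ell+|v|^2>R$ with $R$ chosen large relative to $M$ one has $|\xi-\bar\xi|_{p,q,r}+|\theta-\bar\theta|^\ell+|v-\bar v|^2 \le C_M(|\xi|_{p,q,r}+\theta^\ell+|v|^2)$ (a reverse triangle/convexity inequality for the $p,q,r,\ell$ powers, valid once the "$\xi$" part dominates the "$\bar\xi$" part), so \eqref{bound4} upgrades to \eqref{bound9} with $K_1/4$ after adjusting constants; the bounded region is unchanged. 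The main obstacle I anticipate is the unbounded-region analysis of \eqref{bound4}: one must verify that the cross term $(\hat\eta-\bar{\hat\eta})(\theta-\bar\theta)$, which is \emph{not} sign-definite, together with the linear term $-\hat\psi_\theta(\bar\xi,\bar\theta)(\theta-\bar\theta)$ hidden inside $\hat\psi(\xi,\theta|\bar\xi,\bar\theta)$, can be absorbed into the genuinely coercive part $\hat e(\xi,\theta)\sim c(|\xi|_{p,q,r}+\theta^\ell)$ — this is where the precise growth hypotheses \eqref{Agr.con.2}--\eqref{Agr.con3} and the freedom to enlarge $R$ are essential, and care is needed because $\hat\psi(\xi,\theta|\bar\xi,\bar\theta)$ is convex in $\xi$ but not in $\theta$ (indeed $\hat\psi_{\theta\theta}<0$), so positivity of $I$ relies on the compensating $+(\hat\eta-\bar{\hat\eta})(\theta-\bar\theta)$ term, exactly as in the computation of $\nabla^2 H - \sum G^k\nabla^2 A^k$ in Section \ref{sec2}.
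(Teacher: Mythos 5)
Your overall strategy --- split each estimate by whether $|\xi|_{p,q,r}+\theta^{\ell}+|v|^2$ exceeds a large threshold $R$, use the growth conditions \eqref{Agr.con.1}--\eqref{Agr.con3} together with Young's inequality on the unbounded piece, and compactness on the bounded piece, then feed the upper bounds of Lemma \ref{lemma1} through the coercivity estimate \eqref{bound4} --- is exactly the route the paper takes. Steps for \eqref{bound5}--\eqref{bound9} match the paper essentially item for item, including the observation that for $\bar U\in\Gamma_{M,\delta}$ with $R$ large relative to $M$ one may replace $|\xi|_{p,q,r}+\theta^{\ell}+|v|^2$ by $|\xi-\bar\xi|_{p,q,r}+|\theta-\bar\theta|^{\ell}+|v-\bar v|^2$ at the cost of a factor $2$.

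There is, however, one real wobble in your treatment of \eqref{bound4} on the bounded region. You assert that ``the Hessian $\nabla^2_{(\xi,\theta)}\hat\psi$ is positive definite,'' justifying it via $\hat\psi_{\xi\xi}>0$ and $\hat\eta_\theta>0$; but $\hat\eta_\theta>0$ means $\hat\psi_{\theta\theta}=-\hat\eta_\theta<0$, so $\nabla^2_{(\xi,\theta)}\hat\psi$ is in fact indefinite, and a Taylor expansion of $\hat\psi$ alone cannot yield a lower bound on $I$. You acknowledge this yourself in the final paragraph, which contradicts the earlier claim. The paper avoids this by arguing in the conserved variable $V=A(U)=(\xi,v,E)^T$: there the mathematical entropy $H$ is strictly convex (this is precisely what the computation $\nabla^2 H-\sum_k G^k\nabla^2 A^k>0$ in Section \ref{sec2} establishes), so $I=\bar\theta\,H(U|\bar U)\ge c\,|A(U)-A(\bar U)|^2$ on the compact set, and one then passes from $|A(U)-A(\bar U)|$ to $|U-\bar U|$ using the invertibility of $A$ and a bound on $\nabla_V A^{-1}$ over the compact. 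You should either reformulate the bounded-region argument this way or, equivalently, Taylor-expand the full expression for $I$ (not just $\hat\psi$) and use that the relevant quadratic form is the diagonal positive matrix $\tfrac1\theta\,\mathrm{diag}(\hat\psi_{\xi\xi},I_{3\times 3},\hat\eta_\theta)$, in which the $(\hat\eta-\bar{\hat\eta})(\theta-\bar\theta)$ term you flagged is precisely what turns $\hat\psi_{\theta\theta}<0$ into $\hat\eta_\theta>0$.

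Two minor points. In the passage from \eqref{bound3} to \eqref{bound8} you remark that ``$|v-\bar v|^2$ is itself $\le 2I$''; this is not self-evident from the definition of $I$ (the $\hat\psi$ and $\hat\eta$ contributions are not individually signed), so cite \eqref{bound9} (or \eqref{bound4} in the bounded region), where this is actually proved. Also, for \eqref{bound5} in the unbounded region it is cleaner to note directly, as the paper does, that \eqref{Agr.con3} implies $|\hat\eta(\xi,\theta|\bar\xi,\bar\theta)|/(|\xi|_{p,q,r}+\theta^{\ell})\to 0$, which combined with the first line of \eqref{bound4} gives the estimate; your route via growth of $\partial_\xi\hat\eta$ is not needed.
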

\begin{proof}
Let $(\bar{\xi},\bar{v},\bar{\theta})\in\Gamma_{M,\delta}$ and choose $r=r(M):=M^p+M^q+M^r+M^{\ell}+M^2$
for which $\Gamma_{M,\delta}\subset B_r=\{(\xi,v,\theta)::\; |\xi|_{p,q,r}+\theta^{\ell}+|v|^2\leq r\}$. 
We divide the proof into four steps.

\textbf{Step $1.$} Note that  $I(\xi,v,\theta|\bar{\xi},\bar{v},\bar{\theta})$ can be written in the form
\begin{equation*}
I(\xi,v,\theta|\bar{\xi},\bar{v},\bar{\theta})=\hat{e}-\bar{\hat{\psi}}-\partial_F\hat{\psi}(F-\bar{F})-\partial_{\zeta}\hat{\psi}(\zeta-\bar{\zeta})-\partial_w\hat{\psi}(w-\bar{w})-\hat{\eta}\bar{\theta}+\frac{1}{2}|v-\bar{v}|^2 \, .
\end{equation*}
Consider first the region $|\xi|_{p,q,r}+\theta^{\ell}+|v|^2>R,$ for $R>r(M)+1$. 
Using Young's inequality and (\ref{Agr.con.1}), (\ref{Agr.con3}) we deduce that by selecting $R$ sufficiently large
we obtain the bound
\begin{align*}
I(\xi,v,\theta|\bar{\xi},\bar{v},\bar{\theta})
&\geq \min\left\{c,\frac{1}{2}\right\} (|\xi|_{p,q,r}+\!\theta^{\ell}\!+\!|v|^2)\!-\!c_1|\hat{\eta}(\xi,\theta)|\!-\!c_2(|F|\!+\!|\zeta|\!+\!|w|)\!-\!c_3|v|\!-\!c_4\\
&\quad\geq K_1(|\xi|_{p,q,r}+\theta^{\ell}+|v|^2)-c_1|\hat{\eta}(\xi,\theta)|-c_5\\
&\quad\geq \frac{K_1}{2}(|\xi|_{p,q,r}+\theta^{\ell}+|v|^2).
\end{align*}
for some constant $K_1$ and $|\xi|_{p,q,r}+\theta^{\ell}+|v|^2>R$.

In the complementary region $|\xi|_{p,q,r}+\theta^{\ell}+|v|^2\leq R$, it holds 
\begin{align*}
\frac{1}{\bar{\theta}}I(\xi,v,\theta|\bar{\xi},\bar{v},\bar{\theta})
&=\bar{\theta}\tilde{H}(A(U)|A(\bar{U}))\\
&=\bar{\theta}[\tilde{H}(A(U))-\tilde{H}(A(\bar{U}))-\tilde{H}_V(A(\bar{U}))(A(U)-A(\bar{U}))]\\
&\geq\min_{\substack{\tilde{U}\in B_R \\ \delta\leq\bar{\theta}\leq M}}\{\bar{\theta}\tilde{H}_{VV}(A(\tilde{U}))\}|A(U)-A(\bar{U})|^2,
\end{align*}
since $\tilde{H}(V)$ is convex in $V:=A(U)=(\xi,v,E)^T$. Hence,
\begin{align*}
|U-\bar{U}|&=\left|\int_0^1\frac{d}{d\tau}[A^{-1}(\tau A(U)+(1-\tau)A(\bar{U}))]\:d\tau\right|\\
&\leq\left|\int_0^1\nabla_V(A^{-1})(\tau A(U)+(1-\tau)A(\bar{U}))\:d\tau\right|\:|A(U)-A(\bar{U})|\\
&\leq C |A(U)-A(\bar{U})|,
\end{align*}
where
$$C=\sup_{\substack{U\in B_R \\ \bar{U}\in\Gamma_{M,\delta}}}\left|\int_0^1\nabla_V(A^{-1})(\tau A(U)+(1-\tau)A(\bar{U}))\:d\tau\right|\:|A(U)-A(\bar{U})|<\infty.$$
Therefore
\begin{align*}
I(\xi,v,\theta|\bar{\xi},\bar{v},\bar{\theta})\geq\frac{K_2}{C}|U-\bar{U}|^2,
\end{align*}
for $K_2:=\displaystyle\delta\min_{\tilde{U}\in B_R}\{\bar{\theta}\tilde{H}_{VV}(A(\tilde{U}))\}>0.$

\textbf{Step $2.$} Now, to prove (\ref{bound5}) for $|\xi|_{p,q,r}+\theta^{\ell} +|v|^2 >R,$ for $R>r(M)+1,$
 we use the growth assumption (\ref{gr.con3}) to get
\begin{small}
	\begin{align*}
	&\lim_{|\xi|_{p,q,r}+\theta^{\ell}\to\infty}\frac{|\hat{\eta}(\xi,\theta|\bar{\xi},\bar{\theta})|}{|\xi|_{p,q,r}+\theta^{\ell}}
	=\lim_{|\xi|_{p,q,r}+\theta^{\ell}\to\infty}\frac{|\hat{\eta}(\xi,\theta)|}{|\xi|_{p,q,r}+\theta^{\ell}}=0.
	\end{align*}
\end{small}
and
\begin{align*}
&\lim_{|\xi|_{p,q,r}+\theta^{\ell}+|v|^2\to\infty}\frac{|\hat{\eta}(\xi,\theta|\bar{\xi},\bar{\theta})|}{|\xi|_{p,q,r}+\theta^{\ell}+|v|^2} =0.
\end{align*}
On the complementary region $|\xi|_{p,q,r}+\theta^{\ell} +|v|^2\leq R,$ 
\begin{align*}
|\hat{\eta}(\xi,\theta|\bar{\xi},\bar{\theta})|&\leq\max_{ B_R} |\nabla_{(\xi,\theta)}^2\hat{\eta}|(|F-\bar{F}|^2\!+\!|\zeta-\bar{\zeta}|^2\!+\!|w-\bar{w}|^2\!+\!|\theta-\bar{\theta}|^2)\\
&\leq C I(\xi,v,\theta|\bar{\xi},\bar{v},\bar{\theta})\;,
\end{align*}
by~\eqref{bound4}.

\textbf{Step $3.$}  Since $(\bar{\xi},\bar{v},\bar{\theta})\in\Gamma_{M,\delta}\subset B_r,$ there holds
\begin{align*}
|F-\bar{F}|^p&+|\zeta-\bar{\zeta}|^q+|w-\bar{w}|^r+|\theta-\bar{\theta}|^{\ell}+|v-\bar{v}|^2\\
&\leq(|F|+M)^p+(|\zeta|+M)^q+(|w|+M)^r+(\theta+M)^{\ell}+(|v|+M)^2\;.
\end{align*}
Since
\begin{align*}
\lim_{|\xi|_{p,q,r}+\theta^{\ell}+|v|^2\to\infty}\frac{(|F|+M)^p+(|Z|+M)^q+(|w|+M)^r+(\theta+M)^{\ell}+(|v|+M)^2}{|\xi|_{p,q,r}+\theta^{\ell}+|v|^2}=1\;,
\end{align*}
we may select $R$ such that
\begin{align*}
|F-\bar{F}|^p+|\zeta-\bar{\zeta}|^q+|w-\bar{w}|^r+&|\theta-\bar{\theta}|^{\ell}+|v-\bar{v}|^2\\
&\leq2(|F|^p+|\zeta|^q+|w|^r+\theta^{\ell}+|v|^2+1)
\end{align*}
when $|\xi|_{p,q,r}\!+\theta^{\ell}\!+|v|^2 \geq R.$ Thus~(\ref{bound9}) follows.

\textbf{Step $4.$} The proof of (\ref{bound6}) and (\ref{bound7}) follows from (\ref{bound4}) and Lemma~\ref{lemma1}
by an argument similar to Step 2 using \eqref{Agr.con.2}.
It remains to show (\ref{bound8}) in the region $|\xi|_{p,q,r}+\theta^{\ell}+|v|^2>R.$
Using~(\ref{bound4}), (\ref{bound9}) and $(\bar{\xi},\bar{v},\bar{\theta})\in\Gamma_{M,\delta}$, 
\begin{align*}
\left|\left(\ph-\phb\right)(v_i-\bar{v}_i)\right|&\leq
\tfrac{1}{2} \left| \frac{\partial\Phi^B}{\partial F}(F)-\frac{\partial\Phi^B}{\partial F}(\bar{F}) \right|^2+ \tfrac{1}{2} |v-\bar{v}|^2 \\
&\le   C_1 (1+ |F|^4 ) + C_2 I(\xi,v,\theta|\bar{\xi},\bar{v},\bar{\theta}) \\
&\leq C (|\xi|_{p,q,r}+\theta^{\ell})+C_2 I(\xi,v,\theta|\bar{\xi},\bar{v},\bar{\theta})\\
&\leq C I(\xi,v,\theta|\bar{\xi},\bar{v},\bar{\theta}) \, ,
\end{align*}
which completes the proof. \end{proof}

\begin{lemma}
	\label{lemma3}
	Under the assumptions of Lemma \ref{lemma1}, and given that
	\begin{align}
	\label{assumption_theta_k}
	0<\underline{\delta}\leq\theta(x,t), \quad \text{and} \quad k(\xi,\theta)+\frac{\theta^2}{k(\xi,\theta)}\leq C' \hat{e}(\xi,\theta),
	\end{align}
	there exists a constant $C=C(\underline{\delta})>0$ such that
	\begin{align}
	\label{bound10}
	\frac{\theta^2}{k}\left|\frac{k}{\theta}-\frac{\bar{k}}{\bar{\theta}}\right|^2
	\leq C I(\xi,v,\theta|\bar{\xi},\bar{v},\bar{\theta})
	\end{align}
	for all $(\bar{\xi},\bar{v},\bar{\theta})\in\Gamma_{M,\delta}$.
\end{lemma}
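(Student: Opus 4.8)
The plan is to estimate the left side of \eqref{bound10} separately on the two regimes used throughout Lemma \ref{lemma1}, namely the far-field region $\{|\xi|_{p,q,r}+\theta^{\ell}+|v|^2>R\}$ and the bounded region $\{|\xi|_{p,q,r}+\theta^{\ell}+|v|^2\leq R\}$, with the radius $R=R(M,\delta)$ chosen as in Lemma \ref{lemma1}; here $\bar k$ stands for the constitutive function $k$ evaluated at the barred state, $\bar k=k(\bar\xi,\bar\theta)$, and $k$ is assumed $C^1$ on the relevant compact sets. The computational backbone is the elementary identity
\[
\frac{\theta^2}{k}\Bigl(\frac{k}{\theta}-\frac{\bar k}{\bar\theta}\Bigr)^2
= k-\frac{2\theta\bar k}{\bar\theta}+\frac{\theta^2\bar k^2}{k\bar\theta^2},
\]
valid since $k,\theta,\bar k,\bar\theta>0$, which brings the combination $k+\frac{\theta^2}{k}$ into play and thereby the hypothesis \eqref{assumption_theta_k}.

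On the far-field region I would argue as follows. As $(\bar\xi,\bar v,\bar\theta)$ ranges over the compact set $\Gamma_{M,\delta}$ of \eqref{defGamma}, on which $k$ is continuous and positive and $\bar\theta\geq\delta$, the ratio $\bar k/\bar\theta$ is bounded by a constant $K_M=K_M(M,\delta)$. Discarding the negative middle term of the identity gives
\[
\frac{\theta^2}{k}\Bigl(\frac{k}{\theta}-\frac{\bar k}{\bar\theta}\Bigr)^2
\leq k+K_M^2\,\frac{\theta^2}{k}
\leq \max\{1,K_M^2\}\Bigl(k+\frac{\theta^2}{k}\Bigr)
\leq C\,\hat e(\xi,\theta)
\]
by \eqref{assumption_theta_k}. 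Next, \eqref{Agr.con.1} bounds $\hat e(\xi,\theta)$ by $c(|\xi|_{p,q,r}+\theta^{\ell})+c$, while in this region \eqref{bound4} of Lemma \ref{lemma2} gives both $|\xi|_{p,q,r}+\theta^{\ell}\leq\tfrac{2}{K_1}I(\xi,v,\theta|\bar\xi,\bar v,\bar\theta)$ and $I(\xi,v,\theta|\bar\xi,\bar v,\bar\theta)\geq\tfrac{K_1R}{2}$; combining these yields $\hat e(\xi,\theta)\leq C\,I(\xi,v,\theta|\bar\xi,\bar v,\bar\theta)$, hence \eqref{bound10} on the far-field region.

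On the bounded region I would use compactness together with the hypothesis $\theta\geq\underline\delta$. Rewriting the left side as $|k\bar\theta-\bar k\theta|^2/(k\bar\theta^2)$, the denominator is bounded below by a positive constant depending on $\underline\delta$ and $\delta$, since the continuous positive function $k$ attains a positive minimum on the compact set $\{|\xi|_{p,q,r}+\theta^{\ell}+|v|^2\leq R,\ \theta\geq\underline\delta\}$ and $\bar\theta\geq\delta$; moreover the factor $\frac{k}{\theta}-\frac{\bar k}{\bar\theta}=g(F,\theta)-g(\bar F,\bar\theta)$, with $g:=k/\theta$ Lipschitz on that set, is dominated by $C(|F-\bar F|+|\theta-\bar\theta|)$, and $\theta^2/k$ is bounded there (as $\theta^{\ell}\leq R$). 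This gives
\[
\frac{\theta^2}{k}\Bigl(\frac{k}{\theta}-\frac{\bar k}{\bar\theta}\Bigr)^2
\leq C\bigl(|F-\bar F|^2+|\theta-\bar\theta|^2\bigr)
\leq C\,I(\xi,v,\theta|\bar\xi,\bar v,\bar\theta)
\]
by \eqref{bound4}, and combining the two regions proves \eqref{bound10}. The only slightly delicate point is the lower bound on $k$ in the bounded region: it is precisely there that the hypothesis $\theta\geq\underline\delta$ is used, and it is why the final constant depends on $\underline\delta$; everything else is the algebraic identity above together with the estimates already furnished by Lemmas \ref{lemma1} and \ref{lemma2}.
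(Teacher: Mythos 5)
Your proof takes essentially the same route as the paper's: split into the far-field region and the bounded region, use boundedness of $\bar k/\bar\theta$ on $\Gamma_{M,\delta}$ together with hypothesis \eqref{assumption_theta_k} to reduce the far-field estimate to $k+\theta^2/k\leq C\hat e\leq C\,I$ via \eqref{Agr.con.1} and \eqref{bound4}, and use compactness with the lower bound $\theta\geq\underline\delta$ to get a Lipschitz-type estimate on the bounded region controlled by \eqref{bound4}/\eqref{bound9}. The only cosmetic difference is that the paper expands $(a-b)^2\leq C_1(a^2+1)$ rather than writing out the exact square and dropping the cross term, and it leaves the absorption of the additive constant in \eqref{Agr.con.1} (via $I\geq K_1R/2$) implicit where you make it explicit; both are the same argument.
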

\begin{proof}
Consider first the region $|\xi|_{p,q,r}+\theta^{\ell}>R.$ Using the assumptions (\ref{assumption_theta_k}) and
because of (\ref{Agr.con.1}) we have
\begin{align*}
\frac{\theta^2}{k}\left|\frac{k}{\theta}-\frac{\bar{k}}{\bar{\theta}}\right|^2
&\leq C_1 \frac{\theta^2}{k}\left(\frac{k^2}{\theta^2}+1\right)\\
&= C_1 \left(k+\frac{\theta^2}{k}\right)\\
&\leq C \hat{e}(\xi,\theta)
\end{align*}
for all $(\bar{\xi},\bar{\theta})\in\hat{\Gamma}_{M,\delta}.$ Then bound~(\ref{bound4}) yields~(\ref{bound10}) 
in this region.
In the complementary region $|\xi|_{p,q,r}+\theta^{\ell}\leq R,$ since $0<\underline{\delta}\leq\theta$ there holds
\begin{align*}
\frac{\theta^2}{k}\left|\frac{k}{\theta}-\frac{\bar{k}}{\bar{\theta}}\right|^2
\leq C (|F-\bar{F}|^2+|\theta-\bar{\theta}|^2),
\end{align*}
for all $(\bar{\xi},\bar{\theta})\in\hat{\Gamma}_{M,\delta}.$ Hence, (\ref{bound9}) implies
(\ref{bound10}).
\end{proof}

\section*{Acknowledgement}
This project has received funding from the European Union's Horizon 2020 programme under the Marie Sklodowska-Curie
grant agreement No 642768. Christoforou was partially supported by the Internal grant "SBLawsMechGeom" No 21036 from University of Cyprus. 
AET was partially supported by the Simons - Foundation grant 346300 and the Polish Government MNiSW 2015-2019 matching fund;
he thanks the Institute of Mathematics of the Polish Academy of Sciences, Warsaw, for their hospitality 
during his stay as a Simons Visiting Professor.


\end{document}